\newtheorem{theorem}{Theorem}[section]
\newtheorem{proposition}[theorem]{Proposition}
\newtheorem{example}[theorem]{Example}
\newtheorem{lemma}[theorem]{Lemma}
\newtheorem{corollary}[theorem]{Corollary}
\theoremstyle{definition}
\newtheorem{remark}{Remark}
\newtheorem*{proposition*}{Proposition}
\newcommand{\ord}{\text{ord}}
\newcommand{\Q}{\mathbb{Q}}
\newcommand{\Z}{\mathbb{Z}}
\newcommand{\F}{\mathbb{F}}
\newcommand{\FF}{\mathcal{F}}
\newcommand{\Gal}{\operatorname{Gal}}
\DeclareMathOperator{\res}{res}
\newcommand{\lmfdbec}[2]{\href{http://www.lmfdb.org/EllipticCurve/Q/#1}{#2}}
\title{Tamagawa Numbers of Elliptic Curves with Prescribed Torsion Subgroup or Isogeny}
\date{\today}
\author{Antonela Trbović}
\address{Department of Mathematics, Faculty of Science, University of Zagreb, Bijenička cesta 30, 10000 Zagreb, Croatia}
\email{antonela.trbovic@math.hr}
\thanks{The author was supported by the QuantiXLie Centre of Excellence, a project
	co-financed by the Croatian Government and European Union through the
	European Regional Development Fund - the Competitiveness and Cohesion
	Operational Programme (Grant KK.01.1.1.01.0004) and by the Croatian Science Foundation under the project no. IP-2018-01-1313.}
\begin{document}
\maketitle

\begin{abstract}
We study Tamagawa numbers of elliptic curves with torsion $\Z/2\Z\oplus \Z/14\Z$ over cubic fields and of elliptic curves with an $n-$isogeny over $\Q$, for $n\in\{6,8,10,12,14,16,17,18,19,37,43,67,163\}$. 
Bruin and Najman \cite{BN} proved that every elliptic curve with torsion $\Z/2\Z \oplus \Z/14\Z$ over a cubic field is a base change of an elliptic curve defined over $\Q.$ We find that Tamagawa numbers of elliptic curves defined over $\Q$ with torsion $\Z/2\Z\oplus \Z/14\Z$ over a cubic field are always divisible by $14^2$, with each factor $14$ coming from a rational prime with split multiplicative reduction of type $I_{14k},$ one of which is always $p=2.$ The only exception is the curve \lmfdbec{1922c1}{1922.e2}, with $c_E=c_2=14.$
The same curves defined over cubic fields over which they have torsion subgroup $\Z/2\Z \oplus \Z/14\Z$ turn out to have the Tamagawa number divisible by $14^3$. As for $n-$isogenies, Tamagawa numbers of elliptic curves with an $18-$isogeny must be divisible by 4, while elliptic curves with an $n-$isogeny for the remaining $n$ from the mentioned set must have Tamagawa numbers divisible by 2, except for finite sets of specified curves. 
\end{abstract}

\section{Introduction}

Let $E$ be an elliptic curve over a number field $K$ and denote by $\Sigma$ the set of all finite primes of $K$. For each $v\in\Sigma,$ the completion of $K$ at $v$ will be denoted by $K_v$ and the residue field of $v$ by $k_v=\mathcal{O}_{K_v}/(\pi)$, where $\mathcal{O}_{K_v}$ is the ring of integers of $K_v$ and $\pi$ is a uniformizer of $\mathcal{O}_{K_v}.$

The subgroup $E_0(K_v)$ of $E(K_v)$ consists of all the points that reduce modulo $\pi$ to a non-singular point of $E(k_v)$. It is known that this group has finite index in $E(K_v)$ so we can define the Tamagawa number $c_v$ of $E$ at $v$ to be that index, i.e., $$c_v:=\left[E(K_v) : E_0(K_v)\right].$$ In light of this, we define the Tamagawa number of $E$ over $K$ to be the product
$c_{E/K}:=\prod_{v\in\Sigma}c_v.$ We will write $c_E$ instead of $c_{E/K}$ wherever it does not cause confusion.

It makes sense to study how the value $c_E$ depends on $E(K)_{tors},$ since $c_E/\#E(K)_{tors}$ appears as a factor in the leading term of the $L-$function of $E/K$ in the conjecture of Birch and Swinnerton-Dyer (see, for example, \cite[Conj. F.4.1.6]{HS}).

We start with some known results about Tamagawa numbers, first of which is given by Lorenzini in \cite{DL} on Tamagawa numbers of elliptic curves defined over $\Q$ with a specific torsion subgroup.
\begin{proposition}{\cite[Proposition 1.1]{DL}}\label{lor}
	Let $E/\Q$ be an elliptic curve with a $\Q-$rational point of order $N$. The following statements hold with at most five explicit exceptions for a given $N$.
	\begin{itemize}
		\item[(a)] If $N=4$, then $(N/2)\mid c_E$, except for $E=\lmfdbec{15a8}{X_1(15)}, \lmfdbec{15a7}{15.a4},$ and $ \lmfdbec{17a4}{17.a4}.$
		\item[(b)] If $N=5,6$ or $12$, then $N\mid c_E$, except for $E=\lmfdbec{11a3}{X_1(11)}, \lmfdbec{14a4}{X_1(14)}, \lmfdbec{14a6}{14.a4},$ and $ \lmfdbec{20a2}{20.a3}.$
		\item[(c)] If $N=10$, then $(N^2/2)\mid c_E$.
		\item[(d)] If $N=7,8$ or $9$, then $N^2\mid c_E$, except for $E=\lmfdbec{15a4}{15.a8}, \lmfdbec{21a3}{21.a3}, \lmfdbec{26b1}{26.b2}, \lmfdbec{42a1}{42.a5}, \lmfdbec{48a6}{48.a6}, \lmfdbec{54b3}{54.b2}$ and $ \lmfdbec{102b1}{102.c5}.$
	\end{itemize}
Without exception, $N\mid c$ if $N=7,8,9,10$ or $12$.
\end{proposition}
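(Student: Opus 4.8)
The plan is to reduce the global statement to a local computation of the individual $c_v$ and then to carry that computation out explicitly, exploiting the fact that for every $N\in\{4,5,6,7,8,9,10,12\}$ the modular curve $X_1(N)$ has genus $0$. First I would put $E$ into Tate normal form $y^2+(1-c)xy-by=x^3-bx^2$ with the prescribed point $P=(0,0)$, and impose the condition that $P$ have order exactly $N$. By Kubert's parametrization this forces $b$ and $c$, and hence $\Delta$, $c_4$ and $j$, to be explicit rational functions of a single parameter $t$ ranging over $X_1(N)\setminus\{\text{cusps}\}$. The task then becomes to bound $\prod_v c_v$ from below for the universal curve $E_t$ by controlling the primes at which $t$ forces bad reduction.

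The local engine is the N\'eron model together with the Tate curve. At a prime $v$ of split multiplicative reduction $E(\Q_v)\cong\Q_v^\times/q^{\Z}$ with $v(q)=v(\Delta)=n$, the component group is $\Phi_v\cong\Z/n\Z$ via $x\mapsto v(x)\bmod n$ and $c_v=n$; at non-split multiplicative and additive primes $c_v\in\{1,2,3,4\}$. The key local lemma I would establish is that whenever $t$ reduces modulo $p$ to a cusp of $X_1(N)$, the point $P$ lands in the part of $\Phi_v$ of order $N$, so that $N\mid c_v$. Concretely this is read off from the factorization of $\Delta(t)$: the factors attached to the cusps occur with multiplicity divisible by $N$, which forces $v(q)$ to be a multiple of $N$ at the corresponding primes. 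Counting these cuspidal factors and measuring how much of the $N$-torsion each component group captures is what separates the cases; the representative phenomenon is that for $N=7,8,9$ two independent cuspidal factors survive with their full weight (giving $N^2\mid c_E$), while for $N=5,6,12$ effectively one does (giving $N\mid c_E$). The halving in parts (a) and (c) reflects the more limited image of the $2$-primary part of $P$ in the relevant component groups, and the exact exponent in each case is ultimately read off from the explicit $\Delta(t)$.

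Having pinned down the generic behaviour, I would assemble the global bound from $c_E=\prod_v c_v$. The unconditional assertion that $N\mid c$ for $N=7,8,9,10,12$ is the softer half: it needs only a single cuspidal prime to survive with its full contribution, and I would show one always does by a valuation argument on $\Delta(t)$ that cannot degenerate at every cusp simultaneously. The sharper divisibilities demand that a second contribution also survive, and it is there that the exceptions intrude.

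The main obstacle is the exceptional curves. The generic argument breaks exactly when a prime that ``should'' be multiplicative is instead additive --- a Kodaira type $I_n^*$, $III$, $IV$, or the like, where $\Phi_v$ is too small or not cyclic --- or when $P$ reduces into the identity component, or when two cuspidal factors collide at a single small prime $p\in\{2,3\}$ and Tate's algorithm must be run by hand. Each such degeneration pins the parameter $t$ to one of finitely many values or forces the conductor to be small, so I would bound the conductor, enumerate the finitely many surviving curves, and check their Tamagawa numbers directly; these are precisely the curves named in (a), (b) and (d). Handling the additive cases at $p=2$ and $p=3$ correctly, and certifying that the finite search misses nothing, is the delicate point.
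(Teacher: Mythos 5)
First, a point of calibration: the paper does not prove this proposition at all --- it is quoted as background from Lorenzini \cite{DL}, and the paper's own original arguments (Propositions \ref{prop:p=2}, \ref{main}, \ref{prop1}, etc.) are only analogues of it. So your proposal can only be compared against Lorenzini's argument and against those analogous proofs. Your skeleton --- Tate normal form with $P=(0,0)$, Kubert's one-parameter families over the genus-$0$ curves $X_1(N)$, analysis at primes where the parameter meets a cusp, component groups via the Tate curve, and a finite enumeration of exceptional parameter values --- is indeed the standard route and matches both Lorenzini's proof and the paper's methodology. However, two load-bearing steps are wrong or missing as stated.

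The first is factual: the claim that ``the factors attached to the cusps occur with multiplicity divisible by $N$'' is false. For $N=7$ the Tate normal form has $b=t^3-t^2$, $c=t^2-t$ and $\Delta(t)=t^7(t-1)^7(t^3-8t^2+5t+1)$; the cubic factor corresponds to cusps just as much as $t$ and $t-1$ do, but it occurs with multiplicity $1$ (these are the width-one cusps). Only at the primes dividing the factors of full multiplicity does the argument run; at the others the torsion point reduces into the identity component and contributes nothing. The second gap is the serious one: divisibility of $\ord_p(\Delta)$ by $N$ at a prime of multiplicative reduction does \emph{not} yield $N\mid c_p$, because if the reduction is non-split then $c_p\le 2$ no matter how large $\ord_p(\Delta)$ is (see Table \ref{table1}). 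One must either prove splitness directly --- this is exactly where the paper invests its effort in its own propositions, via Lemma \ref{lema1} (splitting of $T^2+a_1T-a_2$ in Tate's algorithm) and Lemma \ref{lema2} ($-c_6$ a square mod $p$) --- or prove that the image of $P$ in $E(\Q_p)/E_0(\Q_p)$ has order $N$, e.g.\ by checking that $(0,0)$ reduces to the node of the special fibre (for $N=7$ and $p\mid t$ one has $b\equiv c\equiv 0$, so the reduction is $y^2+xy=x^3$ with singular point $(0,0)$); since for non-split reduction the group of $\F_p$-rational components has order at most $2$, an image of order $N\ge 3$ forces splitness as a byproduct. Your proposal asserts this component-group conclusion but justifies it only by the incorrect multiplicity claim, so the crux of the proof is missing.

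A smaller misdirection: the finiteness of the exceptional list does not come from ``bounding the conductor.'' It comes from the observation that the failure of the generic argument forces the numerator and denominator of the cuspidal quantities (for $N=7$, of $t$ and $t-1$) to be supported at a single prime, a multiplicative constraint with an explicitly solvable, finite solution set; this is the same device the paper uses in Proposition \ref{main}, where the degenerate cases $u-1=\pm 2^k$ collapse to $u\in\{0,\pm1,3\}$ and produce the single exception \lmfdbec{1922c1}{1922.e2}. With that replacement, and with the split/non-split verification supplied at each relevant cusp (done separately for the composite $N$, where the image of $P$ in the component group can have order a proper divisor of $N$ --- this is what produces the factors $N/2$ and $N^2/2$ in parts (a) and (c)), your outline would become a genuine proof.
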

He also proved that the smallest possible ratio $c_E/\#E(K)_{tors}$ for elliptic curves over $\Q$ is $1/5$, achieved only by the modular curve \lmfdbec{11a3}{$X_1(11)$}. He gave as well some results about Tamagawa numbers of elliptic curves over quadratic extentions. Some of his results mentioned in Proposition \ref{lor} were later expanded upon by Krumm in \cite{K} which are presented with the following result.

\begin{proposition}{\cite[Propositions 5.2.2, 5.2.3]{K}}\label{kru}
Let $E/\Q$ be an elliptic curve with a $\Q-$rational point of order $N$.
\begin{itemize}
	\item[(a)] If $N=7,$ then $7\mid c_2.$
	\item[(b)] If $N=9,$ then $9\mid c_2$ and $3\mid c_3.$
\end{itemize}
\end{proposition}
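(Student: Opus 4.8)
The plan is to argue entirely locally, at $p=2$ for both parts and additionally at $p=3$ for part (b), using only the Hasse bound together with the fact that the kernel of reduction contains no torsion of the relevant order. Recall the two exact sequences
\[
0 \to E_1(\Q_p) \to E_0(\Q_p) \to \tilde{E}_{\mathrm{ns}}(\F_p) \to 0, \qquad 0 \to E_0(\Q_p) \to E(\Q_p) \to \Phi_p \to 0,
\]
where $E_1(\Q_p)=\ker\big(E(\Q_p)\to \tilde{E}(\F_p)\big)$ is isomorphic to the formal group $\hat{E}(p\Z_p)$ and $\Phi_p$ is the component group, of order $c_p$. Since $\hat{E}(p\Z_p)$ is a pro-$p$ group, $E_1(\Q_p)$ has no prime-to-$p$ torsion; moreover, when the absolute ramification satisfies $e<p-1$ it is torsion-free altogether. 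Consequently a torsion point $P$ of order $m$ meets $E_1(\Q_p)$ trivially and injects into the finite group $E(\Q_p)/E_1(\Q_p)$, of order $\#\tilde{E}_{\mathrm{ns}}(\F_p)\cdot c_p$, whenever $p\nmid m$ or whenever $e<p-1$; in either case $m \mid \#\tilde{E}_{\mathrm{ns}}(\F_p)\cdot c_p$.

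First I would treat $p=2$, which handles (a) and the divisibility $9\mid c_2$ in (b). Here $N\in\{7,9\}$ is odd, so the order-$N$ point injects and $N\mid \#\tilde{E}_{\mathrm{ns}}(\F_2)\cdot c_2$. Now I run through the reduction types at $2$ and record $\#\tilde{E}_{\mathrm{ns}}(\F_2)$: good reduction gives $\#\tilde{E}(\F_2)\le 5$ with $c_2=1$; additive reduction gives $\#\tilde{E}_{\mathrm{ns}}(\F_2)=\#\mathbb{G}_a(\F_2)=2$ with $c_2\le 4$; nonsplit multiplicative gives $\#\tilde{E}_{\mathrm{ns}}(\F_2)=3$ with $c_2\le 2$; and split multiplicative of type $I_n$ gives $\#\tilde{E}_{\mathrm{ns}}(\F_2)=\#\F_2^{*}=1$ with $c_2=n$. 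In the first three cases $\#\tilde{E}_{\mathrm{ns}}(\F_2)\cdot c_2$ is at most $5$, $8$, $6$ respectively, none divisible by $7$ or by $9$, so they cannot occur. Thus the reduction at $2$ is split multiplicative and $N\mid c_2$.

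Next I would treat $p=3$ for part (b). The subtlety is that $9$ is a power of $p=3$, so prime-to-$p$ injectivity no longer applies; instead I use that $\Q_3$ is unramified, so $e=1<2=p-1$ and $E_1(\Q_3)$ is torsion-free, which restores injectivity of the full order-$9$ point. Hence $9\mid \#\tilde{E}_{\mathrm{ns}}(\F_3)\cdot c_3$. Running through the types at $3$: good reduction gives $\#\tilde{E}(\F_3)\le 7$ with $c_3=1$, and nonsplit multiplicative gives $\#\tilde{E}_{\mathrm{ns}}(\F_3)=4$ with $c_3\le 2$; both make $\#\tilde{E}_{\mathrm{ns}}(\F_3)\cdot c_3\le 8$, not divisible by $9$, hence impossible. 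The surviving cases are additive, where $\#\tilde{E}_{\mathrm{ns}}(\F_3)=\#\mathbb{G}_a(\F_3)=3$ forces $3\mid c_3$, and split multiplicative, where $\#\tilde{E}_{\mathrm{ns}}(\F_3)=\#\F_3^{*}=2$ and $9\mid 2c_3$ forces $9\mid c_3$; in both cases $3\mid c_3$.

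The arguments are short, so the only real work is bookkeeping: correctly tabulating $\#\tilde{E}_{\mathrm{ns}}(\F_p)$ and the possible Tamagawa numbers for every Kodaira type at the small primes $p=2,3$, and, crucially, verifying the torsion-freeness of $E_1(\Q_3)$ so that the order-$9$ point really does inject at $p=3$. I expect this last point—handling torsion whose order is divisible by the residue characteristic—to be the main obstacle, since it is exactly where the clean ``reduction is injective on torsion'' heuristic can fail; the saving grace is the unramifiedness of $\Q_3$ and the bound $e<p-1$.
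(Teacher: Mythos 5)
Your proof is correct, but there is nothing in the paper to compare it against line by line: Proposition \ref{kru} is quoted from Krumm's thesis \cite{K} and the paper supplies no proof of it. Krumm's own argument — and it is the same style this paper uses for its new results (Propositions \ref{prop:p=2} and \ref{main}) — is computational: write the curves with a rational point of order $N$ in Tate normal form, compute $\Delta$ and $c_4$ as functions of the parameter, and use valuation/resultant bookkeeping plus Tate's algorithm to show that every specialization has split multiplicative reduction of the required type at $2$ (and the corresponding statement at $3$ for $N=9$). Your route is genuinely different and softer: the filtration $E_1(\Q_p)\subset E_0(\Q_p)\subset E(\Q_p)$, the pro-$p$ property of the formal group (and, crucially, its torsion-freeness over $\Q_3$ since $e=1<p-1=2$), Lagrange's theorem in the finite quotient of order $\#\tilde{E}_{\mathrm{ns}}(\F_p)\cdot c_p$, and the Hasse bound at the tiny primes $2$ and $3$. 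Both are valid; yours is shorter, self-contained, avoids the modular parametrization entirely, and even recovers for free that the reduction at $2$ must be split multiplicative. What the computational approach buys instead is the finer, global information this paper actually needs elsewhere — exact Kodaira types $I_{14k}$, the existence of a second split multiplicative prime, and explicit lists of exceptional curves — which a pure counting argument cannot enumerate. The two places where your argument could have broken, namely the uniform bound $c_p\le 4$ for all additive types (including $I_n^*$) and the injectivity of the order-$9$ point at $p=3$ despite $9$ being a power of the residue characteristic, are both handled correctly.
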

Furthermore, Krumm proved some results on Tamagawa numbers of elliptic curves with prescribed torsion over number fields of degree up to 5. He also conjectured that $\ord_{13}(c_E)$ is even for all elliptic curves defined over quadratic fields with a point of order 13 and the same conjecture was later proved by Najman in \cite{N}. In their recent work \cite{BR}, Barrios and Roy explicitly classified Tamagawa numbers of elliptic curves defined over $\Q$ with non-trivial torsion subgroups at primes of additive reduction.

The results mentioned in Prosopitions \ref{lor} and \ref{kru} were the motivation to explore further the problem of finding Tamagawa numbers of elliptic curves with a certain torsion subgroup and at a certain prime. Bruin and Najman \cite{BN} proved that every elliptic curve with torsion $\Z/2\Z \oplus \Z/14\Z$ over a cubic field is a base change of an elliptic curve defined over $\Q.$ Using that fact, we prove in Section \ref{tors} that the Tamagawa numbers of all elliptic curves defined over $\Q$ that have torsion subgroup $\Z/2\Z\oplus \Z/14\Z$ over a cubic field are always divisible by $14^2$, except in the case of the curve \lmfdbec{1922c1}{1922.e2}, where $c_E=c_2=14.$ For each such curve we prove that at $p=2$ the reduction is split multiplicative, so $c_2=14k,$ and there always exists one more prime, distinct from 2, at which the reduction is also split multiplicative of type $I_{14t}.$ As a consequence of this result we get that elliptic curves defined over a cubic field with torsion subgroup $\Z/2\Z\oplus \Z/14\Z$ have Tamagawa numbers divisible by $14^3.$ We mention the explicit results in the following theorem.

\begin{theorem}
	\begin{itemize}
		\item Let $E$ be an elliptic curve defined over $\Q$ with torsion subgroup $\Z/2\Z \oplus \Z/14\Z$ over a cubic field. 
		\begin{itemize}
			\item[(a)] The reduction at $2$ is split multiplicative of type $I_{14k}$ and $c_2=14k,$ where $k\in\Z, \: k\geq 1.$
			\item[(b)] There exist at least 2 rational primes with split multiplicative reduction of type $I_{14k},$ where $k\in\Z, \: k\geq 1,$ one of which is always the prime $2$, so $14^2 \mid c_E,$ except for the curve $\lmfdbec{1922c1}{1922.e2}$, where $c_E=c_2=14.$
		\end{itemize}
	\item Let $E$ be an elliptic curve defined over a cubic field $K$ with torsion subgroup $\Z/2\Z \oplus \Z/14\Z$. Let $\mathfrak{P}$ be a prime of $K$ over $2$. Then the reduction at $\mathfrak{P}$ is split multiplicative of type $I_{14k}$ and $c_\mathfrak{P}=14k,$ where $k\in\Z, \: k\geq 1.$ Furthermore, $14^3 \mid c_E.$
	\end{itemize}
\end{theorem}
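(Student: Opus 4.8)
The plan is to use the result of Bruin and Najman to pass to an elliptic curve $E/\Q$ and first to determine its rational arithmetic. Since the full $2$-torsion becomes rational over the cubic field $K$, the splitting field $\Q(E[2])$ must be contained in $K$; as its degree divides $6$ and a quadratic field cannot sit inside a cubic one, either $E(\Q)[2]=(\Z/2\Z)^2$ or $E(\Q)[2]=0$ with $\Q(E[2])=K$ a cyclic cubic field. I expect to show that the relevant curves satisfy $E(\Q)_{tors}=\Z/7\Z$ with $K=\Q(E[2])$ cyclic, so that they form (a subfamily of) the $X_1(7)$-family cut out by the condition that the $2$-division cubic be irreducible with square discriminant: the order-$7$ point over $K$ comes from the rational $7$-torsion, and the extra $(\Z/2\Z)^2$ from $K=\Q(E[2])$. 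This reduces everything to an explicit one-parameter family together with a discriminant condition.

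For the local statement at $2$ I would argue as follows. Because $2$ has multiplicative reduction (to be shown) and $\Q_2(E[2])$ has degree at most $2$ over $\Q_2$ for a Tate curve, $2$ cannot be inert in the cyclic cubic field $K$; hence $2$ is totally split in $K$, there is a prime $\mathfrak{P}\mid 2$ with residue field $\F_2$, and $E(\Q_2)=E(K_{\mathfrak{P}})\supseteq\Z/2\Z\oplus\Z/14\Z$. Now good reduction is impossible, since the order-$7$ point would inject into $\tilde E(\F_2)$, which has at most $5$ points; additive reduction is impossible, since the prime-to-$2$ torsion would inject into $\tilde E^{ns}(\F_2)\cong(\F_2,+)$ of order $2$; and nonsplit multiplicative reduction is impossible, since the order-$7$ point would inject into the norm-one subgroup of $\F_4^\times$, of order $3$. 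Thus the reduction is split multiplicative of type $I_m$, and the Tate parametrization $E(\Q_2)\cong\Q_2^\times/q^{\Z}$ gives $E(\Q_2)[2]\cong\mu_2\oplus\Z/\gcd(2,m)\Z$ and $E(\Q_2)[7]\cong\mu_7(\Q_2)\oplus\Z/\gcd(7,m)\Z$. Since $\mu_7\not\subseteq\Q_2$, the full $2$-torsion forces $2\mid m$ and the order-$7$ point forces $7\mid m$, whence $14\mid m$ and $c_2=m=14k$. The same computation performed over $K_{\mathfrak{P}}$ (where the full torsion is rational and the residue field is still $\F_2$) yields the corresponding local statement over $K$, and descent, using $n_{\mathfrak{P}}=e(\mathfrak{P}\mid 2)\,m$ with $\gcd(14,3)=1$, transfers the divisibility back and forth between $\Q$ and $K$.

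The existence of the second split multiplicative prime is the crux, and I expect it to be the main obstacle. The point $R_7\in E(\Q)$ of order $7$ maps with order $7$ into the component group $\Phi_q\cong\Z/c_q\Z$ at every bad prime $q$ where it is not absorbed by the identity component, forcing $7\mid c_q$; to upgrade this to $14\mid c_q$ one needs in addition the full $2$-torsion to be locally rational, i.e.\ $q$ totally split in $K=\Q(E[2])$, placing a second $\Z/2\Z$ in $\Phi_q$. I would therefore analyze the minimal discriminant of the explicit family, showing that apart from $2$ there is always a further prime of split multiplicative reduction of type $I_{14t}$ that is totally split in $K$; the finitely many parameter values for which this fails can be listed and checked by hand, and I expect all but \lmfdbec{1922c1}{1922.e2} to drop out, leaving it as the unique curve with $c_E=c_2=14$. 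The delicate points are ruling out potential supersingular reduction at $2$ (which would allow $2$ to be inert in $K$ and break the residue-field argument) and controlling these exceptional parameter values, both of which I would settle by direct computation with the parametrized curves.

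Finally, the cubic assertion follows cheaply from the total splitting of $2$ in $K$: there are three primes $\mathfrak{P}_1,\mathfrak{P}_2,\mathfrak{P}_3$ above $2$, each with $K_{\mathfrak{P}_i}=\Q_2$ and split multiplicative reduction of type $I_{14k_i}$, so $c_{\mathfrak{P}_i}=14k_i$ and already $\prod_i c_{\mathfrak{P}_i}$ is divisible by $14^3$. This simultaneously proves $c_{\mathfrak{P}}=14k$ for every prime $\mathfrak{P}\mid 2$ and the bound $14^3\mid c_{E/K}$, and it explains why no exception survives over the cubic field: even for \lmfdbec{1922c1}{1922.e2} the three primes above $2$ each contribute a factor of $14$.
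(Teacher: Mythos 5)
Your proposal rests on a false structural premise that propagates through the whole argument. You expect $E(\Q)_{tors}=\Z/7\Z$ and reduce to a subfamily of the $X_1(7)$-family; in fact every curve with torsion $\Z/2\Z\oplus\Z/14\Z$ over a cubic field has \emph{trivial} torsion over $\Q$ (see the Remark following Proposition \ref{main}): the order-$7$ point is defined over $K$ but not over $\Q$, Galois acts on the kernel of the (rational) $7$-isogeny through an order-$3$ character, and $K=\Q(E[2])$ is cyclic cubic. Consequently the component-group mechanism you propose for what you yourself identify as the crux of part (b) --- a rational point $R_7\in E(\Q)$ of order $7$ mapping into $\Phi_q$ at bad primes --- cannot even start, since no such point exists. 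What then remains of your plan for the second prime is ``analyze the minimal discriminant of the explicit family and check the finitely many exceptions by hand,'' which is precisely the content of the paper's proof of Proposition \ref{main} (the resultant computations $\res\left(u\pm1, \Delta(u)/(u\pm1)^{14}\right)=2^{82}$ and $\res\left(u\pm1, c_4(u)\right)=2^{32}$, the criterion of Lemma \ref{lema2} applied via $-c_6\equiv 2^{\pm 48}\pmod p$, and the case analysis forcing $u\in\{0,\pm 1,3\}$, i.e.\ the single exceptional curve \lmfdbec{1922c1}{1922.e2}); none of this is supplied in the proposal, and the family you would run it on is the wrong one.

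The local argument at $2$ is, in addition, circular. You assume multiplicative reduction (``to be shown'') in order to conclude that $2$ splits totally in $K$, and then use the residue field $\F_2$ obtained from that splitting to rule out good, additive and nonsplit multiplicative reduction. If instead $2$ were inert in $K$, the residue field of $K_\mathfrak{P}$ is $\F_8$ and your injection arguments give no contradiction: good \emph{ordinary} reduction with $\tilde E(\F_8)$ cyclic of order $14$ is consistent with $\Z/2\Z\oplus\Z/14\Z\subseteq E(K_\mathfrak{P})$, since only the prime-to-$2$ torsion must inject into $\tilde E(\F_8)$ (and $\#\tilde E(\F_8)=14$ is within the Hasse bound), while a height-one formal group can absorb one factor $\Z/2\Z$. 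So multiplicative reduction at $2$ cannot be established by purely local considerations; it requires the global input from the parametrized family, which is exactly how the paper proceeds: Proposition \ref{prop:p=2} runs Tate's algorithm on the Jeon--Schweizer model over $\Q_2$ first, and only afterwards is the complete splitting of $2$ invoked (Corollary \ref{kor}, citing \cite[Prop.~3.6]{NT}) to pass to the cubic field. Two pieces of your proposal are sound and, conditional on these global inputs, more conceptual than the paper's computations: the Tate-curve argument that $\Z/2\Z\oplus\Z/14\Z\subseteq E(\Q_2)$ forces split multiplicative reduction of type $I_{14k}$ with $c_2=14k$ (full $2$-torsion forces $2\mid v(q)$, an order-$7$ point forces $7\mid v(q)$ because $\mu_7\not\subseteq\Q_2$), and the derivation of $14^3\mid c_{E/K}$ from the three degree-one primes above $2$, which coincides with the paper's Corollary \ref{kor}. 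But as structured, the proposal cannot be repaired without importing the very results it set out to prove.
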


The proof of this theorem is given by the proofs of Propositions \ref{prop:p=2}, \ref{main} and Corollary \ref{kor} in Section \ref{tors}.

The question which naturally appears next is how does the Tamagawa number of an elliptic curve depend on the isogenies of that elliptic curve. In Section \ref{iso} we give a series of propositions which gives us first results about Tamagawa numbers of elliptic curves with prescribed isogeny. For elliptic curves defined over $\Q$, we were able to prove that if an elliptic curve has an $18-$isogeny, then its Tamagawa number is always divisible by $4$, and if it has an $n-$isogeny, for $n\in \{6,8,10,12,14,16,17,18,19,37,43,67,163\}$, then it has to be divisible by 2. There are finitely many exceptions for some of these results, all of which we list and give their Tamagawa numbers. 

\begin{theorem}
	Let $E$ be an elliptic curve over $\Q$ with an $N-$isogeny.
\begin{itemize}
	\item If $N=18,$ then $4|c_E,$ except for the curves $\lmfdbec{14a3}{14.a2}, \lmfdbec{14a4}{14.a5}, \lmfdbec{14a5}{14.a1}, \lmfdbec{14a6}{14.a4}$, where $ c_E=2$.
	\item If $N=10,$ then $2|c_E.$
	\item If $N=8,$ then $2|c_E$, except for the curves $\lmfdbec{15a7}{15.a4}$, $\lmfdbec{15a8}{15.a7}$, $\lmfdbec{48a4}{48.a5}$, where $c_E=1.$
	\item If $N=6,$ then $2|c_E,$ except for the curve $\lmfdbec{20a2}{20.a3}$, where $c_E=3,$  and also the curves $\lmfdbec{80b2}{80.b3}$, $ \lmfdbec{80b4}{80.b1},$ $\lmfdbec{20a4}{20.a1}$, $\lmfdbec{27a3}{27.a4}$ and infinitely many twists of $\lmfdbec{27a3}{27.a4}$, for which $c_E=1$.
	\item If $n\in\{14,17,19,37,43,67,163\}$, then $2|c_E.$
\end{itemize}
\end{theorem}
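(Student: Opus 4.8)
The plan is to read the statement as an assertion about noncuspidal rational points of the modular curves $X_0(N)$, to split it according to the geometry of $X_0(N)$, and in each case to extract the divisibility from the behaviour of the $N$-isogeny kernel at primes of bad reduction. An $N$-isogeny on $E/\Q$ is the same datum as a $\Gal(\overline{\Q}/\Q)$-stable cyclic subgroup $C\subseteq E$ of order $N$, i.e. a noncuspidal point of $X_0(N)(\Q)$. For $N\in\{6,8,10,18\}$ the curve $X_0(N)$ has genus $0$, so $C$ varies in a one-parameter family and I would work with an explicit Weierstrass model $E_t$ of the universal curve over $X_0(N)\cong\P^1$, together with its quadratic twists; for $N\in\{14,17,19,37,43,67,163\}$ the curve $X_0(N)$ has genus $\geq 1$ with only finitely many noncuspidal rational points (Mazur for the primes, Faltings and explicit computation for $37,43,67,163$ and for $14$), so there are only finitely many $j$-invariants to consider, each carrying an infinite family of quadratic twists.

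The local engine is the Tate-curve description at a prime $p$ of multiplicative reduction, as packaged in the preceding propositions of Section \ref{iso}: at split multiplicative $I_n$ one has $c_p=n$ with component group $\Z/n\Z$, a $\Q_p$-rational cyclic subgroup of order $\ell$ that does not reduce into the identity component forces $\ell\mid n=c_p$, while $\mu_\ell$ reduces into the identity component and imposes no condition. The target divisibility is then obtained by producing, for each curve in the family, a prime $p$ of multiplicative reduction at which the relevant part of $C$ is non-\'etale. For $N=10$ the $5$-part of $C$ forces a prime of split multiplicative reduction of type $I_{5k}$, at which one shows the $2$-part of $C$ is simultaneously non-\'etale, so $n$ is even and $2\mid c_p$; for $N=18$ one must combine the $9$-part and the $2$-part of $C$ to force $4\mid c_p$, or else to produce two multiplicative primes each contributing a factor $2$, which is why the conclusion is $4\mid c_E$ rather than merely $2\mid c_E$. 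Concretely I would compute $\Delta(t)$ and $c_4(t),c_6(t)$ for the universal model, read off $v_p(\Delta)=n$ and the split/nonsplit dichotomy, and show that for all but finitely many parameter values (and all twists) some bad prime has the required even contribution.

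For the finite-$j$ cases the argument must be twist-uniform rather than a bare finite check: fixing $j_0\neq 0,1728$, every curve with that invariant is a quadratic twist $E_0^{(d)}$ of a fixed $E_0$, and the $\ell$-isogeny persists under twisting. I would locate a prime $p$ (typically $p=\ell$, where the CM/isogeny structure forces potentially good additive reduction, or else $p=2$) at which the Kodaira type lies in a twist-stable family with even Tamagawa number---for instance the pair $\mathrm{III}$ and $\mathrm{III}^*$, both with $c_p=2$, which are interchanged but not destroyed by quadratic twisting---so that $2\mid c_p$ holds for every squarefree $d$. Running through the finitely many $j_0$ (the CM $j$-invariants of discriminants $-7,-19,-43,-67,-163$, together with the noncuspidal points of $X_0(14)$, $X_0(17)$ and $X_0(37)$) then yields the unconditional $2\mid c_E$.

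The main obstacle is the uniform control of parity across the two kinds of infinite families, and it is exactly here that the exceptional lists are born. Because $\mu_2\subseteq E$ over every $\Q_p$, a $2$-isogeny alone never forces $2\mid c_p$: the $2$-torsion point can reduce into the identity component at every prime, so one must genuinely exploit the odd part of the isogeny and rule out the degenerate parameter values where the curve has potentially good reduction everywhere or has too few bad primes. For $N=6,8,18$ these degenerations do occur (the conductor $14,15,20,27,48,80$ curves), producing the listed exceptions with $c_E\in\{1,2,3\}$, including the infinitely many quadratic twists of $\lmfdbec{27a3}{27.a4}$---a CM curve whose reduction stays additive with odd $c_p$ under all the relevant twists. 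Isolating precisely this family, and proving that no other twist escapes, is the delicate endpoint of the proof. For $N=10$ no such degeneration survives, which is why that case admits no exception, and this must be confirmed by checking that the genus-$0$ model never specializes to a curve lacking a multiplicative prime of type $I_{10k}$.
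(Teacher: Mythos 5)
Your overall architecture matches the paper's: split according to the genus of $X_0(N)$, handle $N\in\{6,8,10,18\}$ via the explicit parameterization of $j$-invariants (the universal curve over $X_0(N)\cong\P^1$), computing $\Delta(h)$ and $c_4(h)$ and using valuations/resultants to exhibit primes of even multiplicative type $I_{2k}$, then handle $N\in\{14,17,19,37,43,67,163\}$ via the finitely many $j$-invariants and a twist-stable prime of type $III$/$III^*$ with $c_p=2$; this is exactly what Propositions \ref{prop1}, \ref{10izo}, \ref{8izo}, \ref{6izo} and \ref{ostalo} do. Your component-group framing of the local step (kernel of the isogeny not reducing into the identity component) is an equivalent engine, though note the paper never needs the split/non-split dichotomy for these results: type $I_{2k}$ has even $c_p$ in either case, which is all that is used. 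One factual slip: for $N=10$ you claim no degeneration survives, but the parameter values $h\in\{-6,-2\}$ do give curves (twists of \lmfdbec{768d3}{768.h1}, \lmfdbec{768d1}{768.h3}) with no even multiplicative prime; they are rescued only because they have type $III$ at $2$, so $c_2=2$ robustly under twisting.

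The genuine gap is in the twist analysis of the exceptional curves, above all the claim that \lmfdbec{27a3}{27.a4} has infinitely many twists with $c_E=1$, which is part of the statement you must prove. You assert this is a CM curve ``whose reduction stays additive with odd $c_p$ under all the relevant twists,'' but that is false as a blanket statement: twisting by a prime $p\neq 3$ of good reduction produces type $I_0^*$ at $p$, whose Tamagawa number is $1$, $2$ or $4$ according to the number of roots of $P(T)=T^3+11664$ in $\F_p$ (the twists by $5$ and by $31$ have $c_p=2$ and $c_p=4$, respectively). The mechanism your proposal is missing is the root count from Tate's algorithm in the $I_0^*$ case, $c_p=1+\#\{\text{roots of }P\text{ in }k_p\}$, combined with a density theorem: since $P$ has Galois group $S_3$, the Frobenius density theorem gives a positive density (one third) of primes $p$ for which $P$ is irreducible modulo $p$, hence $c_p=1$; together with the checks at $2$ and $3$ this produces the infinitely many twists with $c_E=1$. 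The same formula is what makes the other exceptional lists finite and complete: for twists of \lmfdbec{20a2}{20.a3} and \lmfdbec{80b4}{80.b1} the relevant cubic has a rational root ($T(T^2+T-1)$, resp.\ $(T-4)(T^2+3T-29)$), so every $I_0^*$ fibre has $c_p\geq 2$ and only twists supported entirely at the bad primes $2,5$ can have odd $c_E$, which is a finite explicit check. Without this $I_0^*$ analysis your argument can neither establish the infinitude of the \lmfdbec{27a3}{27.a4} exceptions nor certify that no other twist of the exceptional curves escapes with odd Tamagawa number.
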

The proof of this theorem is given by the proofs of Propositions \ref{prop1}, \ref{10izo}, \ref{8izo}, \ref{6izo} and \ref{ostalo} in Section \ref{iso}.

Now let $E$ be an elliptic curve defined over $K_v$, given by a Weierstrass equation
$$y^2+a_1xy+a_3y=x^3+a_2x^2+a_4x+a_6.$$
with discriminant $\Delta$, invariants $c_4$ and $c_6$, and $j-$invariant $j_E=\frac{c_4^3}{\Delta}.$
It will be important for us to distinguish between different types of reductions at finite primes, especially to know when the reduction is multiplicative. For that, we will often use the following well known result.

\begin{proposition}\label{prop:mult}
	\textup{(see \cite[Proposition VII.5.1.b]{S})} With the above notation, the curve $E$ in its minimal model has multiplicative reduction at $v$ of type $I_k$ if and only if $k:=\ord_{v}(\Delta)>0$ and $\ord_{v}(c_4)=0$.
\end{proposition}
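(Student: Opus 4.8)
The plan is to reduce the whole statement to the geometry of the singular point of the reduced curve. Since the given Weierstrass equation is minimal, good reduction is equivalent to $\ord_v(\Delta)=0$; so I would first record that the hypothesis $k:=\ord_v(\Delta)>0$ is precisely the condition that $E$ has bad reduction at $v$, in which case the reduced curve $\tilde E/k_v$ has a unique singular point. The proposition then separates into two tasks: (i) deciding when that singularity is a node (multiplicative reduction) rather than a cusp (additive reduction), and (ii) identifying the Kodaira type as $I_k$ with $k=\ord_v(\Delta)$. Because over a minimal model the three cases — good reduction, a node, a cusp — are mutually exclusive and exhaustive, settling (i) and (ii) gives both directions of the asserted equivalence at once.

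For task (i) I would translate the singular point of $\tilde E$ to the origin over $k_v$. Writing the defining polynomial as $f=y^2+a_1xy+a_3y-x^3-a_2x^2-a_4x-a_6$ and imposing $f=f_x=f_y=0$ at the origin forces $\tilde a_3=\tilde a_4=\tilde a_6=0$ in $k_v$, so the reduced equation has tangent cone $y^2+a_1xy-a_2x^2$. The origin is a node exactly when this quadratic splits into two distinct lines, i.e. when its discriminant $b_2=a_1^2+4a_2$ is nonzero modulo $v$, and a cusp exactly when $b_2\equiv 0$. Now I would invoke the standard relation $c_4=b_2^2-24b_4$ together with $\tilde b_4=0$ (which is immediate from $\tilde a_3=\tilde a_4=0$ and $b_4=2a_4+a_1a_3$), giving $c_4\equiv b_2^2\pmod v$. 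Hence the singularity is a node $\iff \ord_v(c_4)=0$ and a cusp $\iff \ord_v(c_4)>0$, which is exactly the multiplicative/additive dichotomy expressed through $c_4$.

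Task (ii), the identification $k=\ord_v(\Delta)$ together with the label $I_k$, is where the genuine content sits, and it is the step I would cite rather than reprove. Following Tate's algorithm — equivalently, the analysis of the minimal regular model in the multiplicative branch — once we are in the nodal case the special fiber is a cycle of $\ord_v(\Delta)$ rational curves, which is by definition Kodaira type $I_{\ord_v(\Delta)}$, so $k=\ord_v(\Delta)$. The main obstacle is precisely this last point: the node-versus-cusp computation in (i) is elementary and self-contained, but showing that the number of components equals the valuation $\ord_v(\Delta)$ requires the full theory of reduction types, for which I would refer to \cite{S}. Combining the dichotomy of (i) with this identification yields the stated equivalence in both directions.
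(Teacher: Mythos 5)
Your proposal is mathematically sound, but it is worth saying up front that the paper offers no proof of this proposition at all: it is quoted as a standard result, attributed to \cite[Prop.\ VII.5.1.b]{S}, with the identification of the Kodaira type $I_k$ and its Tamagawa number coming from Tate's algorithm \cite{T}. What you have written is essentially a correct reconstruction of the proof behind that citation. Your part (i) --- translating the singular point to the origin so that $\tilde a_3=\tilde a_4=\tilde a_6=0$, reading off the tangent cone $y^2+a_1xy-a_2x^2$, and using $b_4\equiv 0$, hence $c_4\equiv b_2^2 \pmod{v}$, to turn the node/cusp dichotomy into $\ord_v(c_4)=0$ versus $\ord_v(c_4)>0$ --- is precisely Silverman's argument (his Prop.\ III.1.4 combined with VII.5.1), and it does hold in every residue characteristic, including $2$: the discriminant $b_2=a_1^2+4a_2$ of $T^2+a_1T-a_2$ equals the square of the root difference, so it detects a repeated root even when the usual quadratic formula is unavailable, and $c_4$ is invariant under the translation you perform. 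Your part (ii), deferring the equality of $k$ with $\ord_v(\Delta)$ (the length of the cycle of components of the special fiber of the minimal regular model) to Tate's algorithm rather than reproving it, is exactly the division of labor the paper itself makes by citing \cite{T} for types and Tamagawa numbers. So there is no gap; the only caveat is that your argument is not an alternative to anything in the paper but a legitimate expansion of a result the paper simply cites, and for the purposes of this paper a citation (as you give for the component count) would already suffice.
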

As most Tamagawa numbers that we will consider in this paper are coming from primes of multiplicative reduction, it will be important to also distinguish between split and non-split multiplicative reductions and their Tamagawa numbers. One way to do that is by using the algorithm of Tate \cite[Sections 7,8]{T} which works in any characteristic of $k_v.$ Going through the algorithm with a specific elliptic curve and a prime $p$, we get the reduction type at $p$, its Kodaira symbol and the Tamagawa number $c_p.$ It turns out that in the case of split multiplicative reduction $I_k$ we have $c_v=k$ and in the case of non-split multiplicative reduction $I_k$ we have $c_v=1$ or $c_v=2$, depending on the parity of $k$, as indicated in Table \ref{table1}, where we can find all the Tamagawa numbers associated to different reduction types. For distinguishing reduction types in $char(k_v)\neq 2,3$ one can also use the tables in \cite[Table 15.1]{S} or \cite[Section 6]{T}.
\renewcommand{\arraystretch}{1.2}

\begin{table}[h]
	\begin{tabular}{|c|c|c|}
		\hline
		reduction type at $v$ & Kodaira symbol, $k\geq 1$ & Tamagawa number at $v$ \\ \hline
		good & $I_0$ & 1 \\ 
		split multiplicative  & $I_k$          & $k$                      \\
		non-split multiplicative& $I_{2k}$     & $2$        \\
		non-split multiplicative &  $I_{2k-1}$              &   $1$                    \\
		additive& $II,II^*$              & 1                      \\
		additive& $III,III^*$        & 2                      \\
		additive& $IV,IV^*$         & 1,3                      \\
		additive   & $I_0^*$              & $1,2,4$                      \\
		potentially multiplicative   & $I_{2k}^*$            & $2,4$                     \\
		potentially multiplicative    & $I_{2k-1}^*$          & $2,4$               \\ \hline
	\end{tabular}
	\vspace{0,2cm}
	\caption{types of reduction and their Tamagawa numbers}\label{table1}
\end{table}

The computations in this paper were executed in the computer algebra system
Magma \cite{MAGMA}. The code used in this paper can be found at
\url{https://web.math.pmf.unizg.hr/~atrbovi/magma/magma3.htm}. Many of the proofs in this paper omit the information used in them, such as polynomials of very high degree or with large coefficients, but those can be computed with the given code. For the reader who wants to verify the calculations, we recommend that they go through the proofs and the code simultaneously.

\noindent All of the specific curves will be mentioned using their LMFDB labels, with a clickable link to the corresponding webpage in \cite{LMFDB}.

\section{Tamagawa numbers of elliptic curves with torsion subgroup $\Z/2\Z \oplus \Z/14\Z$}\label{tors}

As already mentioned, Bruin and Najman \cite{BN} proved that every elliptic curve with torsion $\Z/2\Z \oplus \Z/14\Z$ over a cubic field is a base change of an elliptic curve defined over $\Q.$ Filip Najman and the author have examined the reduction types at primes with multiplicative reduction of such elliptic curves defined over $\Q$ in \cite[Prop. 3.1]{NT}. We will examine those primes further, as we want to be able to say more about their Tamagawa numbers.
It was proved in \cite[Prop. 3.2]{NT} that those elliptic curves always have multiplicative reduction of type $I_{14k}$ at the rational prime 2.
In this section we are going to prove that the mentioned multiplicative reduction at 2 always has to be split multiplicative, giving the Tamagawa number $c_2=14k$, as shown in Table \ref{table1}. We are also going to prove that there always exists one more prime $p$, with the exception of the curve \lmfdbec{1922c1}{1922.e2}, at which we have split multiplicative reduction of type $I_{14t}$ and $c_p=14t$, which means that the Tamagawa number of the elliptic curve contains the factor $14^2.$
For the base change of every elliptic curve to a field $K$ over which they have torsion $\Z/2\Z \oplus \Z/14\Z$ it turns out that their Tamagawa number is always divisible by $14^3.$

Bruin and Najman in \cite{BN} also showed that elliptic curves with torsion $\Z/2\Z \oplus \Z/14\Z$ are parameterized with $\mathbb{P}^1(\Q)$, so we can write each such curve as $E_u$, for some $u\in\Q.$ They also provided a model, which was used for obtaining the results of \cite[\S 3]{NT}.
We used a different model here, specifically, the one given by Jeon and Schweizer in \cite[\S 2.4]{JS}, since the one in \cite{NT} was dependant on 2 parameters. It did not impose a problem there, since we did not have the need to work with the coefficients of the curve. Even though Jeon and Schweizer do not state that their family consists of all elliptic curves over cubic fields with torsion $\Z/2\Z \oplus \Z/14\Z$, it turns out that it is the case and the reasoning behind it can be found in the accompanying \href{https://web.math.pmf.unizg.hr/~atrbovi/magma/magma3/2families.txt}{Magma code}. Briefly, we compute the isomorphism between different fields of definition of elliptic curves with torsion  $\Z/2\Z \oplus \Z/14\Z$, those are $F$ and $L$ given in \cite{BN} and \cite[\S 2.4]{JS}, respectively. With that isomorphism we map every curve from the family in \cite{BN} and we see that it is isomorphic to one of the curves from the family in \cite[\S 2.4]{JS}. Since \cite{BN} gives us all of the elliptic curves with needed properties, we see that it suffices to only look at the family from \cite[\S 2.4]{JS}.

Jeon and Schweizer provided two models for $E_u$, one of which is 
$$y^2+xy=x^3+A_2(u)x^2+A_4(u)x+A_6(u),$$
and its short Weierstrass model 
$$y^2=x^3+A(u)x+B(u),$$
where we omit $A_2(u),A_4(u), A_6(u), A(u), B(u)$, since they are very large, but thay can be found in the accompanying \href{https://web.math.pmf.unizg.hr/~atrbovi/magma/magma3.htm}{Magma code} or in \cite[\S 2.4]{JS}.
We will be working with the long Weierstrass model when considering the reduction at the prime 2, but generally we will be using the short Weierstrass model, since it is easier to work with.



In \Cref{prop:mult} we mentioned a way of confirming whether the curve has multiplicative reduction at a finite prime. As already stated, it will be very important to distinguish between split and non-split multiplicative reduction, since the associated Tamagawa numbers are different (see \Cref{table1}). The following lemma will be useful in differentiating between those, and it is taken directly from a step in Tate's algorithm.

\begin{lemma}\label{lema1}
	\textup{(\cite[\S 7. Case 2)]{T})} Let $E$ be an elliptic curve and let $p$ be a prime of multiplicative reduction of type $I_{t}$ for $E$. Let $\ord_p(a_i)>0,$ for $i=3,4,6$, and $\ord_p(b_2)=0.$ If $T^2+a_1T-a_2$ splits over $k_p$, then $E$ has split multiplicative reduction at $p$ and $c_p=t.$
\end{lemma}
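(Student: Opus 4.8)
The plan is to read the claim off directly from Case 2 of Tate's algorithm, since the hypotheses are precisely those under which the algorithm detects multiplicative reduction and then decides whether it is split. First I would record what the standing assumptions give geometrically. Because the reduction is of type $I_t$ with $t=\ord_p(\Delta)>0$, \Cref{prop:mult} tells us the special fibre has a node, and the conditions $\ord_p(a_i)>0$ for $i=3,4,6$ place that node at the origin of the reduced curve
\[
\bar y^2+\bar a_1\,\bar x\bar y=\bar x^3+\bar a_2\,\bar x^2
\]
over $k_p$, obtained by killing the coefficients $a_3,a_4,a_6$ modulo $p$.

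Next I would isolate the tangent cone at this node. Collecting the degree-two part of the defining polynomial at the origin gives $\bar y^2+\bar a_1\,\bar x\bar y-\bar a_2\,\bar x^2$, so the two tangent directions have slopes $T=\bar y/\bar x$ satisfying $T^2+\bar a_1 T-\bar a_2=0$, which is exactly the reduction of $T^2+a_1T-a_2$. The discriminant of this quadratic is $a_1^2+4a_2=b_2$, so the hypothesis $\ord_p(b_2)=0$ guarantees that the two roots are distinct over $\overline{k_p}$; this confirms that we genuinely have an ordinary double point rather than a cusp, consistent with multiplicative (and not additive) reduction.

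The decisive step is then the characterization of split multiplicative reduction: the reduction is split precisely when the two tangent lines at the node are individually defined over $k_p$, equivalently when their slopes lie in $k_p$, equivalently when $T^2+a_1T-a_2$ splits over $k_p$. Thus the splitting hypothesis delivers split reduction. Finally, for split multiplicative reduction of type $I_t$ one has $c_p=t$, which I would quote from \Cref{table1} (equivalently, the component group of the N\'eron model is cyclic of order $t$ in the split case), completing the statement.

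I do not expect a genuine obstacle here, as the assertion is a verbatim instance of a single case of Tate's algorithm; the only point demanding care is the bookkeeping that the quadratic governing the tangent cone is exactly $T^2+a_1T-a_2$ with discriminant $b_2$, so that the unit condition $\ord_p(b_2)=0$ and the splitting condition on the quadratic correctly separate the \emph{node versus cusp} dichotomy from the \emph{split versus non-split} one.
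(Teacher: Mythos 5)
Your proposal is correct: the paper itself gives no proof of this lemma, quoting it verbatim as Case 2 of \S 7 of Tate's algorithm, and your argument is an accurate reconstruction of exactly that case — the node sits at the origin because $a_3,a_4,a_6$ vanish mod $p$, the tangent directions are the roots of $T^2+a_1T-a_2$, whose discriminant is $b_2$ (so $\ord_p(b_2)=0$ correctly handles separability even in residue characteristic $2$), splitness is by definition the rationality of those tangent lines, and $c_p=t$ for split type $I_t$ is the standard component-group fact recorded in \Cref{table1}. Nothing is missing; this is essentially the same content the paper delegates to the citation of \cite[\S 7. Case 2)]{T}.
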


As a part of the proof of the following proposition we will show that the reduction at the prime 2 is multiplicative of type $I_{14k}$, which is already proved in \cite[Proposition 3.2]{NT}. We had to include it here again and could not continue from there because of the already mentioned differences in the models we used.

\begin{proposition}\label{prop:p=2}
	Let $E$ be an elliptic curve defined over $\Q$ with torsion subgroup $\Z/2\Z \oplus \Z/14\Z$ over a cubic field. Then the reduction at $2$ is split multiplicative of type $I_{14k}$ and $c_2=14k,$ where $k\in\Z, \: k\geq 1.$
\end{proposition}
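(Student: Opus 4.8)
My plan is to work entirely with the long Weierstrass model $y^2 + xy = x^3 + A_2(u)x^2 + A_4(u)x + A_6(u)$ of $E_u$ and to verify, at $p=2$, first the hypotheses of \Cref{prop:mult} and then those of \Cref{lema1}. Two facts about this model are structural and independent of $u$: it has $a_1 = 1$ and $a_3 = 0$. Consequently $b_2 = a_1^2 + 4A_2(u) = 1 + 4A_2(u)$, so $\ord_2(b_2) = 0$ the moment $A_2(u)$ is $2$-integral, and $\ord_2(a_3) = \infty > 0$ automatically. Thus two of the three standing hypotheses of \Cref{lema1} come for free, and the argument will hinge on controlling $A_2(u)$, $A_4(u)$, $A_6(u)$ modulo $2$.

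First I would establish that the reduction is multiplicative of type $I_{14k}$, reproving \cite[Prop. 3.2]{NT} in the present model. Viewing $u$ $2$-adically and treating $A_2(u), A_4(u), A_6(u), c_4(u), \Delta(u)$ as explicit rational functions, I would compute $\ord_2(c_4(u))$ and $\ord_2(\Delta(u))$ and show $\ord_2(c_4(u)) = 0$ while $\ord_2(\Delta(u)) > 0$; by \Cref{prop:mult} this gives multiplicative reduction, and because $\ord_2(c_4) = 0$ the model cannot be simplified further, so it is automatically minimal at $2$ and $\ord_2(\Delta(u))$ is exactly the subscript in the Kodaira type $I_n$. The additional input that this valuation is always divisible by $14$ is \cite[Prop. 3.2]{NT}, which I would reprove here by the same explicit valuation computation in the new model.

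It then remains to upgrade \emph{multiplicative} to \emph{split multiplicative} via \Cref{lema1}. Having $\ord_2(b_2) = 0$ and $\ord_2(a_3) > 0$ already, I need $\ord_2(A_4(u)) > 0$ and $\ord_2(A_6(u)) > 0$, and I need the polynomial $T^2 + a_1 T - a_2 = T^2 + T - A_2(u)$ to split over $k_2 = \F_2$. Over $\F_2$ the unique irreducible quadratic is $T^2 + T + 1$, so splitting is equivalent to $A_2(u) \equiv 0 \pmod 2$, in which case $T^2 + T = T(T+1)$ has the two distinct roots $0,1$. Hence the whole proposition reduces to the three congruences $A_2(u) \equiv A_4(u) \equiv A_6(u) \equiv 0 \pmod 2$ together with the Step-one conclusions; once these hold, \Cref{lema1} gives split multiplicative reduction and $c_2 = \ord_2(\Delta(u)) = 14k$, in agreement with \Cref{table1}.

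The real work, and the step I expect to be the main obstacle, is verifying these $2$-adic conditions uniformly over all admissible $u \in \P^1(\Q)$. Because the $A_i(u)$ are rational functions of $u$, their $2$-integrality and their residues depend on the $2$-adic size of $u$; I would write $u = m/n$ with $\gcd(m,n) = 1$, homogenize, and split into cases according to the parities of $m$ and $n$ (and, where the coefficients are not a priori $2$-integral, rescale to a genuinely integral minimal model before reading off the residues). Carrying out this finite but delicate casework, and discarding the finitely many degenerate $u$ where denominators vanish modulo $2$, is where the accompanying Magma computation is indispensable; the conceptual content is entirely contained in the reduction to the three congruences above.
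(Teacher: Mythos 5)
Your proposal follows essentially the same route as the paper's proof: work in the Jeon--Schweizer long Weierstrass model, get multiplicative reduction of type $I_{14k}$ from $\ord_2(c_4)=0$ and $\ord_2(\Delta)>0$ via \Cref{prop:mult} (reproving the valuation statement of \cite[Prop.~3.2]{NT} in this model), and then upgrade to split multiplicative via \Cref{lema1}, using $a_1=1$ and $a_2\equiv 0 \pmod 2$ so that $T^2+T-a_2$ splits over $\F_2$, with the $2$-adic casework on $u$ (equivalently, the parities of its numerator and denominator, refined by $\ord_2(u-1)$) carried out by explicit computation. This matches the paper's argument in both structure and key steps, so the proposal is correct and essentially identical.
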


\begin{proof}
From the long Weierstrass model of $E_u$ from \cite[\S 2.4]{JS} we get the associated discriminant
$$\Delta(u)=\dfrac{2^{14}(u-1)^{14}(u+1)^{14}f_1(u)}{f_2(u)}$$
and the $c_4-$invariant $c_4(u)$, which can be computed with the accompanying \href{https://web.math.pmf.unizg.hr/~atrbovi/magma/magma3/prop2.2.txt}{Magma code}. The polynomials $f_{i}(u), \: i=1,2,$ are monic polynomials in $\Z[u]$. We will go through all of the possibilities of the prime 2 dividing $u$ and see that the reduction at 2 in all of those cases is split multiplicative and $14\mid c_2.$
\begin{itemize}
	\item[(1)] If $\ord_2(u)>0,$ we compute $\ord_2\left(\frac{\Delta(u)}{2^{14}} \right)=\ord_2(c_4(u))=0$ and from \Cref{prop:mult} we conclude that the reduction at 2 is multiplicative of type $I_{14}.$ We compute $a_1=1$ and $\ord_2(a_2)>0$ and since our model satisfies the conditions of Lemma \ref{lema1}, we get that $c_2=14.$
	

	\item[(2)] If $\ell:=\ord_2(u)<0,$ then we make the substitution $u\mapsto \frac{1}{m}$ so $\ord_2(m)>0,$ and in the new model we get the discriminant
	$$\Delta(m)=\dfrac{2^{14}(m-1)^{14}m^{14}(m+1)^{14}g_1(m)}{g_2(m)}$$
	and the $c_4-$invariant $c_4(m)$, which can be computed with the accompanying \href{https://web.math.pmf.unizg.hr/~atrbovi/magma/magma3/prop2.2.txt}{Magma code}. The polynomials $g_{i}(m), \: i=1,2,$ are monic polynomials in $\Z[m]$. Using the fact that $\ord_2(m)>0,$ we compute $\ord_2\left(\frac{\Delta(m)}{2^{14}m^{14}} \right)=\ord_2(c_4(m))=0$ and as in the previous case, using \Cref{prop:mult} and \Cref{lema1} we get that the reduction at 2 is split multiplicative of type $I_{14(\ell+1)}$ and $c_2=14(\ell+1).$
	\item[(3)] If $\ord_2(u)=0,$ then $\ell:=\ord_2(u-1)>0.$ After the substitution $u-1\mapsto m$ we have $\ell=\ord_2(m)>0$, the discriminant
	$$\Delta(m)=\dfrac{2^{14}m^{14}(m+2)^{14}h_1(m)}{h_2(m)}$$
	and the $c_4-$invariant $c_4(m)$, which can be computed with the accompanying \href{https://web.math.pmf.unizg.hr/~atrbovi/magma/magma3/prop2.2.txt}{Magma code}. The polynomials $h_{i}(m), \: i=1,2,$ are monic polynomials in $\Z[m]$.
	\noindent We can divide both the numerator and the denominator of $\Delta(m)$ by $2^{48}$ and we get that
	$\ord_2(\Delta (m))=14(\ell-1)$ and if we divide the numerator and the denominator of $c_4(m)$ by $2^{24}$ we get that $\ord_2(c_4(m))=0.$ So if $\ell>1,$ by \Cref{prop:mult} we have that the reduction at 2 is multiplicative of type $I_{14(\ell-1)}$. We compute $a_1=1$ and $\ord_2(a_2)>0$ (after dividing both numerator and the denominator by $2^{12}$) and since our model satisfies the conditions of Lemma \ref{lema1}, we get that $c_2=14(\ell-1).$
	
	\noindent Obviously we have to look at the case $\ell=1$ separately. This means that $u=2n+1,$ where $\ord_2(n)=0.$ After the substitution  $u\mapsto 2n+1$ we have the discriminant
	$$\Delta(n)=\dfrac{n^{14}(n+1)^{14}p_1(n)}{p_2(n)}$$
	and the $c_4-$invariant $c_4(n)$, which can be computed with the accompanying \href{https://web.math.pmf.unizg.hr/~atrbovi/magma/magma3/prop2.2.txt}{Magma code}. The polynomials $p_{i}(n), \: i=1,2,$ are monic polynomials in $\Z[n]$. Since $\ord_2(n)=0,$ we have $t:=\ord_2(n+1)>0$ and $\ord_2(p_i(n))=0,$ for each $i$, so $\ord_2(\Delta (n))=14t$ and $\ord_2(c_4(n))=0$. By \Cref{prop:mult} we see that the reduction at 2 is multiplicative of type $I_{14t}$ and similarly as in previous cases, Lemma \ref{lema1} gives that the reduction is split multiplicative with $c_2=14t.$
\end{itemize}
\end{proof}

\begin{example}
	As it was verified in part (1) of the proof of Proposition \ref{prop:p=2}, if $\ord_2(u)>0,$ then the reduction at 2 is multiplicative of type $I_{14}$ with $c_2=14.$ This allows us to generate an infinite family of elliptic curves that have torsion $\Z/2\Z \oplus \Z/14\Z$ over a cubic field and Tamagawa number exactly $14$ at the prime $2$, i.e., $c_2=14.$ Namely, if we put $u=2^k,$ for any $k\in\Z, \: k\geq 1$, in the long Weierstrass model of $E_u$ from \cite[\S 2.4]{JS} we will get an elliptic curve with $c_2=14.$ For example, with $k=1$ (which gives $u=2$) we get a curve whose minimal model is defined by
	$$y^2 + xy = x^3 - 31714388875x + 2132064170125553,$$
	with $c_2=14$ and torsion subgroup $\Z/2\Z \oplus \Z/14\Z$ over the field $\Q(\alpha),$ where $\alpha$ is a root of the polynomial $3x^3-4x^2-27x+4.$
	
	In a similar manner, part (2) of the proof of Proposition \ref{prop:p=2} allows us to generate an infinite family of examples of elliptic curves that have torsion $\Z/2\Z \oplus \Z/14\Z$ over a cubic field and Tamagawa number $c_2=14t,$ where $t>1.$ When $k:=\ord_2(u)<0,$ the long Weierstrass model of $E_u$ from \cite[\S 2.4]{JS} gives us an elliptic curve with $c_2=14(k+1)$ and torsion subgroup $\Z/2\Z \oplus \Z/14\Z$ over a cubic field. Namely, if we specify $u=\frac{1}{2^k}, \: k\geq 1,$ we get a family of elliptic curves with Tamagawa number $c_2=14(k+1)$. For example, with $k=1$ (which gives $u=\frac{1}{2}$) we get a curve whose minimal model is defined by 
	$$y^2 + xy = x^3 - 35365397163613670x +
	2559848051274532647229668,$$ with $c_2=28$ and torsion subgroup $\Z/2\Z \oplus \Z/14\Z$ over the field $\Q(\alpha),$ where $\alpha$ is a root of the polynomial $-6x^3-47x^2+54x+47.$
	
	All of the statements regarding specific elliptic curves in this example can be verified using the accompanying \href{https://web.math.pmf.unizg.hr/~atrbovi/magma/magma3/ex2.3.txt}{Magma code}.	
\end{example}

\begin{corollary}\label{kor}
	Let $E$ be an elliptic curve defined over a cubic field $K$ with torsion subgroup $\Z/2\Z \oplus \Z/14\Z$. Let $\mathfrak{P}$ be a prime of $K$ over $2$. Then the reduction at $\mathfrak{P}$ is split multiplicative of type $I_{14k}$ and $c_\mathfrak{P}=14k,$ where $k\in\Z, \: k\geq 1.$ Furthermore, $14^3 \mid c_E.$
\end{corollary}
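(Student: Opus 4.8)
The plan is to deduce both assertions from the rationality of $E$ over $\Q$ together with the behaviour of multiplicative reduction under base change, postponing the arithmetic of the cubic field $K$ to the very last step. Write $E=(E')_K$, where $E'/\Q$ is the model furnished by Bruin--Najman. First I would settle the statement about a single prime $\mathfrak{P}\mid 2$. By \Cref{prop:p=2}, $E'$ has split multiplicative reduction of type $I_{14k}$ at $2$, so in its minimal model $\ord_2(\Delta)=14k$ and $\ord_2(c_4)=0$. Passing to the completion $K_{\mathfrak{P}}$ multiplies the valuation of $\Delta$ by the ramification index $e=e(\mathfrak{P}/2)$ while keeping $\ord_{\mathfrak{P}}(c_4)=e\cdot 0=0$; in particular the model stays minimal at $\mathfrak{P}$, and by \Cref{prop:mult} the reduction is multiplicative of type $I_{14ke}$. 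It remains \emph{split}, since the two tangent directions at the node are already rational over the residue field at $2$ and a fortiori over the larger residue field $k_{\mathfrak{P}}$; hence $c_{\mathfrak{P}}=14ke$ by \Cref{table1}, and setting $k'=ke$ gives the first claim.

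For the divisibility $14^3\mid c_E$ I would prove that $2$ splits completely in $K$, producing three primes $\mathfrak{P}_1,\mathfrak{P}_2,\mathfrak{P}_3$ above $2$; by the previous paragraph each $c_{\mathfrak{P}_i}$ is divisible by $14$, so their product already forces $14^3\mid c_E$. Two facts underlie this. First, $K$ is cut out by an order-$3$ character of $G_{\Q}$ acting either on the full $2$-torsion (the $A_3$-field $\Q(E'[2])$) or on a generator of the rational $7$-isogeny kernel, so in either case $K/\Q$ is cyclic cubic and every prime is inert, totally ramified, or totally split. Second, the reduction $I_{14k}$ at $2$ forces the torsion to be rational over $\Q_2$: on the Tate curve $E_q$ with $\ord_2(q)=14k$ one has $\sqrt{q}\in\Q_2$, for otherwise every prime above $2$ would acquire even local degree, impossible in a cubic field because $\sum_{\mathfrak{P}\mid 2}e_{\mathfrak{P}}f_{\mathfrak{P}}=3$; and $q^{1/7}\in\Q_2$ because every unit of $\Z_2$ is a seventh power ($x\mapsto x^7$ is a bijection of $\Z_2^{\times}$). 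Consequently $E'(\Q_2)$ already contains a copy of $\Z/2\Z\oplus\Z/14\Z$.

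The hard part is to upgrade ``$E'(\Q_2)$ contains a copy of the torsion'' to ``the \emph{specific} torsion that defines $K$ is $\Q_2$-rational'', which is precisely what makes the decomposition group at $2$ act trivially and $2$ split completely. When the full $2$-torsion is not already rational over $\Q$, the field $\Q(E'[2])$ is a nontrivial subfield of the cubic $K$, hence equals $K$; since $E'[2]\subseteq E'(\Q_2)$, the decomposition group at $2$ fixes $E'[2]$ and $2$ splits completely, giving $14^3\mid c_E$. The delicate case is when the full $2$-torsion is already rational over $\Q$, so that $K=\Q(P)$ for a point $P$ of order $7$; then I must rule out that the $K$-rational line $\langle P\rangle$ localizes at $2$ to the cyclotomic line $\langle\zeta_7\rangle$ (rational only over the unramified cubic $\Q_2(\zeta_7)$) rather than to the toric line $\langle q^{1/7}\rangle$ (which is $\Q_2$-rational), the former making $2$ inert. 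I would resolve this on the explicit family $E_u$ of \cite{JS}, exactly as in \Cref{prop:p=2}, by checking that the cubic defining $K$ splits completely over $\Q_2$ for every $u$. Finally, I would note that this splitting is unavoidable for \lmfdbec{1922c1}{1922.e2}: having no second rational prime of split multiplicative reduction of type $I_{14\cdot}$, it can only acquire the factor $14^3$ from three primes above $2$, which both confirms that $2$ must split completely there and shows that the argument cannot be replaced by invoking a second bad prime.
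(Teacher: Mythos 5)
Your first paragraph is sound, and in fact proves the local statement in more generality than the paper needs: base change multiplies the valuation of the minimal discriminant by $e(\mathfrak{P}/2)$, keeps $\ord_{\mathfrak{P}}(c_4)=0$, and split multiplicative reduction stays split over the larger residue field, so $14\mid c_{\mathfrak{P}}$ for every prime above $2$ regardless of how $2$ decomposes. The paper takes a shorter route: it cites \cite[Proposition 3.6]{NT} for the fact that $2$ splits completely in $K$, so that $e=f=1$, $K_{\mathfrak{P}_i}=\Q_2$, and $c_{\mathfrak{P}_i}=c_2=14k$ on the nose, from which $14^3\mid c_E$ is immediate. Thus the entire content of the corollary beyond \Cref{prop:p=2} is the complete splitting of $2$, and this is exactly where your proposal has a genuine gap.

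Your Tate-curve argument for complete splitting is correct in the case $\Q(E'[2])\neq\Q$: then $\Q(E'[2])=K$ (a cubic field has no proper nontrivial subfields), the parity argument on $\sum_{\mathfrak{P}\mid 2}e_{\mathfrak{P}}f_{\mathfrak{P}}=3$ forces $\sqrt{q}\in\Q_2$, hence $E'[2]\subseteq E'(\Q_2)$ and $K_{\mathfrak{P}}=\Q_2(E'[2])=\Q_2$ for every $\mathfrak{P}\mid 2$. (This is in fact the only case that occurs, since these curves have trivial torsion over $\Q$, as recorded in the remark following \Cref{main}; but you neither prove nor invoke that, so you cannot discard the other case.) In the complementary case ($E'[2]\subseteq E'(\Q)$ and $K=\Q(P)$ for a point $P$ of order $7$) your proof stops short: you correctly identify that one must rule out the local line through $P$ being the cyclotomic line $\mu_7$, which would make $2$ inert, but you only state that you ``would resolve this on the explicit family \ldots by checking that the cubic defining $K$ splits completely over $\Q_2$ for every $u$'' --- the decisive verification is never performed, and it is essentially a re-derivation of \cite[Proposition 3.6]{NT}, the very result the paper cites. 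Until that computation is carried out (or the case is shown to be vacuous), the proof of $14^3\mid c_E$ is incomplete. Two smaller defects: the claim that this case always carries a rational $7$-isogeny (needed so that $\langle P\rangle$ is Galois-stable and hence one of the two local eigenlines) is asserted, not proved --- a priori the Galois orbit of $P$ could span $E'[7]$; and the closing remark about \lmfdbec{1922c1}{1922.e2} is circular, since it uses the conclusion $14^3\mid c_E$ to ``confirm'' that $2$ splits completely for that curve.
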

\begin{proof}
	Recall that $E$ is an elliptic curve defined over $\Q$. We will denote by $E_K$ the base change of $E$ to $K$. From \cite[Proposition 3.6]{NT} we know that $2$ splits completely in $K$, i.e. $2\mathcal{O}_K=\mathfrak{P}_1\cdot \mathfrak{P}_2\cdot \mathfrak{P}_3.$ This means that the residue field $k_{\mathfrak{P}_i}=\mathcal{O}_K/\mathfrak{P}_i,$ where $\mathfrak{P}_i$ is a prime lying over $2$, $i=1,2,3$, is isomorphic to $k_p$. For each $\mathfrak{P}_i$ we have that $E_K \text{ mod } \mathfrak{P}_i=E \text{ mod } 2$ and hence $c_{\mathfrak{P}_i}=c_2=14k,$ for $i=1,2,3,$ where $k\in\Z, \: k\geq 1.$
\end{proof}

In the following proposition we will deal with primes distinct from 2, for which we have a simpler way of determining split multiplicative reduction than going through Tate's algorithm as we did in Proposition \ref{prop:p=2}.

\begin{lemma}\label{lema2}
	\textup{(\cite[Lemma 2.2]{CCH})} Let $p\neq 2$ be a prime and let $E$ be an elliptic curve defined over $\Q_p$ with multiplicative reduction at $p$. The reduction is split multiplicative if and only if $-c_6$ is a square in $\mathbb{F}_p^{\times}.$
\end{lemma}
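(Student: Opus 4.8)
The plan is to make the split/non-split dichotomy concrete by locating the node of the reduced curve and asking whether its two tangent directions are rational over $\F_p$, and then to translate that condition into a statement about $c_6$. First I would record that the assertion is well posed: since changing the Weierstrass model multiplies $c_6$ by a sixth power, its class in $\F_p^\times/(\F_p^\times)^2$ is an isomorphism invariant, and at a prime of multiplicative reduction $\ord_p(c_4)=0$ forces $\bar c_6\neq 0$ because $c_4^3-c_6^2=1728\Delta$ with $\ord_p(\Delta)>0$. So $-c_6$ genuinely defines an element of $\F_p^\times$, and being a square there is model-independent.

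For $p\neq 2,3$ I would pass to a short model $y^2=x^3+Ax+B$, with $c_4=-48A$ and $c_6=-864B$. By \Cref{prop:mult} the reduction is multiplicative, so the reduced cubic acquires a double root $x_0\in\F_p$ and a simple root $x_1\in\F_p$, and the singular point is $(x_0,0)$. Writing the reduced equation as $y^2=(x-x_0)^2(x-x_1)$, the tangent cone at the node is $y^2=(x_0-x_1)(x-x_0)^2$; the two branches are rational, i.e. the reduction is split, exactly when $x_0-x_1$ is a square in $\F_p^\times$. Since the coefficient of $x^2$ vanishes, $x_1=-2x_0$, so $x_0-x_1=3x_0$; and from $B=-x_0^2x_1=2x_0^3$ one gets $\bar c_6=-1728x_0^3=-(12x_0)^3$, hence $-\bar c_6=(12x_0)^3$. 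Because the Legendre symbol is multiplicative and $\pm1$-valued, a nonzero cube is a square iff its base is, so $-\bar c_6$ is a square iff $12x_0$ is, iff $3x_0=x_0-x_1$ is (they differ by the square $4$). This is precisely the split condition, finishing these primes.

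The main obstacle is the case $p=3$, where one cannot depress the cubic over $\Q_3$ and the tidy formula $\bar c_6=-(12x_0)^3$ is unavailable. I expect the cleanest remedy --- which also reproves the other primes uniformly --- is the Tate-curve description. A curve with split multiplicative reduction is, over $\Q_p$, isomorphic to a Tate curve $E_q$ with $\ord_p(q)>0$; for $E_q$ the standard $q$-expansions give $\bar c_4=1$ and $\bar c_6=-1$, so $-\bar c_6=1$ is a square, and by invariance of the square class so is $-c_6$ for any split model. On the other hand, the curves with non-split multiplicative reduction are exactly the unramified quadratic twists of the split ones; twisting by a non-square unit $d$ sends $c_6\mapsto d^3c_6$, and since $d^3$ is a non-square (as $d$ is), $-c_6$ is carried to a non-square. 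Thus $-c_6$ is a square precisely in the split case, uniformly for all $p\neq 2$. Here $p\neq 2$ is used both to complete the square in the direct computation and to keep the twist classification by $\Q_p^\times/(\Q_p^\times)^2$ compatible with the square class of $c_6$; at $p=2$ the correspondence breaks and the statement genuinely changes.
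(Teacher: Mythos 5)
Your proof is correct, but there is nothing in the paper to compare it against: the paper does not prove this lemma, it simply quotes it from \cite[Lemma 2.2]{CCH}. So what you have written is a genuine, self-contained proof where the paper has only a citation. Both halves of your argument check out: for $p\geq 5$, the double root $x_0$ of the reduced cubic is Galois-stable hence lies in $\F_p$, the vanishing of the $x^2$-coefficient gives $x_1=-2x_0$ and $-\bar c_6=(12x_0)^3$, and a nonzero cube is a square exactly when its base is, which matches the rationality of the tangent cone $y^2=(x_0-x_1)(x-x_0)^2$; and the Tate-curve argument (split curves have $\bar c_4=1$, $\bar c_6=-1$; non-split curves are exactly the twists by a non-square unit $d$, which scales $c_6$ by $d^3$, of the same square class as $d$) is valid uniformly for all odd $p$. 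One point to tighten: the well-posedness remark should say that the square class of $c_6$ in $\F_p^\times$ is invariant among models for which $c_6$ is a $p$-adic unit (e.g.\ minimal ones), since a general change of model scales $c_6$ by $u^{-6}$ with $u$ not necessarily a unit; that restricted invariance is what you actually use when passing to the short model and to the Tate model.

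For comparison, the argument closest to the paper's own toolkit (and essentially the one in the cited source) is shorter and uniform in odd $p$: translate the node of the reduction to the origin, so that $a_3,a_4,a_6\equiv 0\pmod p$, hence $b_4\equiv b_6\equiv 0$ and $c_6=-b_2^3+36b_2b_4-216b_6\equiv -b_2^3\pmod p$; by the criterion the paper records as Lemma~\ref{lema1}, the reduction is split iff $T^2+a_1T-a_2$ splits over $\F_p$, i.e.\ (as $p$ is odd) iff its discriminant $a_1^2+4a_2=b_2$ is a square, i.e.\ iff $-c_6\equiv b_2^3$ is a square. Your route costs more machinery (Tate's uniformization and the twist classification) but buys more: it explains the dichotomy structurally, reproves the $p\geq 5$ case by elementary means, and makes transparent why the statement fails at $p=2$.
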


\begin{proposition}\label{main}
	Let $E$ be an elliptic curve defined over $\Q$ with torsion subgroup $\Z/2\Z \oplus \Z/14\Z$ over a cubic field. Then there exist at least 2 rational primes with split multiplicative reduction of type $I_{14k},$ where $k\in\Z, \: k\geq 1,$ one of which is always the prime $2$, so $14^2 \mid c_E,$ except for the curve $\lmfdbec{1922c1}{1922.e2}$, where $c_E=c_2=14.$
\end{proposition}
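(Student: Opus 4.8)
The plan is to work with the explicit short Weierstrass model $y^2=x^3+A(u)x+B(u)$ of $E_u$ and to exploit the factorization of its discriminant. From the computation in \Cref{prop:p=2} the discriminant has the shape $\Delta(u)=2^{14}(u-1)^{14}(u+1)^{14}f_1(u)/f_2(u)$, so the factors $(u-1)^{14}$ and $(u+1)^{14}$ are the natural source of reduction of type $I_{14k}$ at primes other than $2$. Writing $u=a/b$ with $\gcd(a,b)=1$ and passing to a minimal integral model, the numerator of $\Delta$ acquires factors $(a-b)^{14}$ and $(a+b)^{14}$. First I would show, using \Cref{prop:mult}, that every odd prime $p$ dividing $a-b$ (resp.\ $a+b$) at which $c_4(u)$ and the extraneous factors $f_1,f_2$ remain units is a prime of multiplicative reduction of type $I_{14\ord_p(a-b)}$ (resp.\ $I_{14\ord_p(a+b)}$); in particular the contribution to $\ord_p(\Delta)$ is always a multiple of $14$, which is exactly what is needed for the factor $14$ in $c_p$.

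Having located the candidate primes, the next step is to decide when they are split, for which \Cref{lema2} is exactly suited: such a $p\neq 2$ gives split multiplicative reduction precisely when $-c_6(u)$ is a square in $\F_p^\times$. The key simplification is that a prime dividing $a-b$ satisfies $u\equiv 1\pmod p$, so $-c_6(u)\equiv -c_6(1)\pmod p$, and likewise $-c_6(u)\equiv -c_6(-1)\pmod p$ at a prime dividing $a+b$. I would therefore compute $-c_6(1)$ and $-c_6(-1)$ explicitly from the model (up to the square factors introduced by clearing denominators and minimalizing) and read off their square classes. The favourable outcome is that at least one of these values is, up to squares, a constant that is automatically a square modulo the relevant primes, so that every odd prime dividing the corresponding quantity yields split reduction of type $I_{14k}$; this is the technical heart of the argument and the place where the explicit polynomials are genuinely needed.

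It then remains to guarantee the existence of at least one such odd prime. Since $\gcd(a-b,a+b)\mid 2$, the only way to avoid a suitable odd prime dividing $a-b$ or $a+b$ is for the relevant quantity to be supported on $2$ together with the finitely many bad primes dividing the resultants of $(u\mp 1)$ with $c_4,f_1,f_2$. This is a finite $S$-unit type restriction on $(a,b)$, solved by finitely many values of $u$, which I would enumerate and test directly in Magma. All but one retain a second split prime of type $I_{14k}$, and the sole genuine exception is $\lmfdbec{1922c1}{1922.e2}$, of conductor $2\cdot 31^2$, whose only multiplicative prime is $2$ (the prime $31$ being additive), so that $c_E=c_2=14$. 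Combining the split prime $2$ from \Cref{prop:p=2} with the second split prime produces two distinct primes each contributing a factor $14$ to $c_E=\prod_v c_v$, whence $14^2\mid c_E$ in all non-exceptional cases.

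The main obstacle I anticipate is the split-versus-non-split determination of the second paragraph: \Cref{lema2} reduces it to the square class of $-c_6(\pm 1)$, but controlling that class uniformly — and in particular ruling out the possibility that the only available odd primes are all non-split — is what carries the real content. If $-c_6(1)$ or $-c_6(-1)$ is an exact square the conclusion is immediate; otherwise one must pair the quadratic-residue condition coming from \Cref{lema2} with the arithmetic of $a\pm b$, and it is precisely the breakdown of this pairing that isolates the exceptional curve $\lmfdbec{1922c1}{1922.e2}$.
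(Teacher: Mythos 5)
Your strategy is, in outline, the same as the paper's: the same Jeon--Schweizer short Weierstrass model, the same factors $(u-1)^{14}$ and $(u+1)^{14}$ of the discriminant as the source of $I_{14k}$ reduction via \Cref{prop:mult}, and the same use of \Cref{lema2} together with the key observation that $u\equiv\pm1\pmod p$ collapses $-c_6(u)$ to a constant modulo $p$. The step you flag as the ``technical heart'' does resolve favourably, exactly as you hope: the paper computes $\res\left(u\mp1,\Delta(u)/(u\mp1)^{14}\right)=2^{82}$ and $\res\left(u\mp1,c_4(u)\right)=2^{32}$, so the only extraneous bad prime is $2$, and $-c_6\equiv 2^{48}\pmod p$ whenever $u\equiv\pm1\pmod p$, which is manifestly a square; hence \emph{every} odd prime dividing your $a-b$ or $a+b$ gives split multiplicative reduction of type $I_{14k}$.

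The genuine gap is in your existence step. You draw candidate primes only from $a-b$ and $a+b$, i.e.\ from primes with $\ord_p(u\mp1)>0$, and you claim the complementary case is a ``finite $S$-unit type restriction'' that can be enumerated in Magma. It is not finite: since the bad primes turn out to be only $2$, the restriction is that $a-b$ and $a+b$ are both $\pm$ powers of $2$, and this has infinitely many coprime solutions, e.g.\ $(a,b)=(2^{k-1}+1,\,2^{k-1}-1)$ for every $k\geq 3$, which gives $a-b=2$ and $a+b=2^{k}$. There is no pair of $S$-units summing to a fixed quantity here, so no finiteness theorem applies, and your proposed enumeration cannot terminate. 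For such $u$ the second split prime does exist, but it comes from the \emph{denominator} of $u$, which your proposal never examines: any odd prime $p\mid b$ satisfies $\ord_p(u-1)<0$, and the paper treats this case separately by substituting $m=1/(u-1)$, computing $\res\left(m,\Delta(m)/m^{14}\right)=2^{-82}$, $\res\left(m,c_4(m)\right)=2^{-32}$ and $-c_6\equiv 2^{-48}\pmod p$, and concluding split multiplicative reduction of type $I_{14k}$ at those primes as well. Once the denominator primes are included by this same resultant-plus-$c_6$ argument (and the case $\ord_p(u+1)>0$ is run in parallel), the genuinely exceptional parameters reduce to $u\in\{0,\pm1,3\}$, with $u=\pm1$ singular, leaving only the curve \lmfdbec{1922c1}{1922.e2} with $c_E=c_2=14$, as in the paper.
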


\begin{proof}
In \Cref{prop:p=2} we have already seen that the reduction at 2 is split multiplicative of type $I_{14k}$ and therefore $c_2=14k.$ It remains to prove that there exists one more prime with the same property for each of those curves.

From the short Weierstrass model of $E_u$ from \cite[\S 2.4]{JS} we get the associated discriminant
	$$\Delta(u)=2^{14}(u - 1)^{14}(u + 1)^{14}f(u)$$
	and the $c_4-$invariant $c_4(u)$, which can be computed with the accompanying \href{https://web.math.pmf.unizg.hr/~atrbovi/magma/magma3/prop2.6.txt}{Magma code}. The polynomial $f(u)$ is a monic polynomial in $\Z[u]$.	
	\begin{itemize}
		\item Assume that there exists a prime $p$ such that $k:=\ord_p(u-1)>0.$ Let $\res(q, r)$ denote the resultant of two arbitrary polynomials $q$ and $r$. We compute
		$$\res\left(u-1, \frac{\Delta(u)}{(u-1)^{14}}\right)=2^{82}, $$
		$$\res\left(u-1, c_4(u)\right)=2^{32}. $$
		For $p\neq 2$ this means that $p^{14k}\mid\Delta(u)$ and $p\nmid c_4(u)$ and from \Cref{prop:mult} we find that the reduction of $E$ at $p$ is multiplicative of type $I_{14k}.$ We want to see that $c_p=14k,$ i.e., that the reduction at $p$ is split multiplicative. According to \Cref{lema2}, it will suffice to check the value of $-c_6$ modulo $p$. Having in mind that $u\equiv 1 \pmod p$, we compute that $-c_6\equiv 2^{48} \pmod p,$ which is a square mod $p$.
		
		\item Assume now that there exists a prime $p$ such that $k:=\ord _p(u-1)<0.$ We put $m:=\frac{1}{u-1}$ so $\ord_p(m)=k>0$ and we get an elliptic curve with the discriminant
		$$\Delta(m)=\frac{1}{2^{14}}m^{14}(m + 1/2)^{14}g(m)$$
		and the $c_4-$invariant $c_4(m)$, which can be computed with the accompanying \href{https://web.math.pmf.unizg.hr/~atrbovi/magma/magma3/prop2.6.txt}{Magma code}. The polynomial $g(m), \: i=1,2,$ is a monic polynomial in $\Z[u]$.
		
		\noindent We compute
		$$\res\left(m, \frac{\Delta(m)}{m^{14}}\right)=2^{-82}, $$
		$$\res\left(m, c_4(m)\right)=2^{-32}. $$
		For $p\neq 2$ this means that $p^{14k}\mid\Delta(m)$ and $p\nmid c_4(m)$ and from \Cref{prop:mult} we find that the reduction of $E$ at $p$ is multiplicative of type $I_{14k}.$
		Having in mind that $m\equiv 0 \pmod p$, we get that $-c_6\equiv 2^{-48} \pmod p,$ which is a square mod $p$, so by \Cref{lema2} we have $c_p=14k.$
	\end{itemize}
So far we have proved that if we have a prime $p\neq 2$ and $k:=\ord_p(u-1)\neq 0$, then we have split multiplicative reduction at $p$ with $c_p=14|k|.$ We have several possibilities when $\ord_p(u-1)= 0$ and those are $u-1=0$ or $u-1=\pm 2^k,\: k\in\Z.$

\noindent When $u-1=\pm 2^k,\: k\neq 0,1,$ then $\ord_p(u+1)>0,$ for some prime $p\neq 2.$ In the cases $u-1=\pm 2^k,\: k=0,1$, or $u-1=0$ we get that $u\in \{0,\pm 1, 3\}.$ For $u=\pm 1$ we get a singular curve and for $u\in \{0, 3\}$ we get the same curve, $\lmfdbec{1922c1}{1922.e2}$, with $c_E=c_2=14.$

\noindent Therefore, if we have a curve distinct from $\lmfdbec{1922c1}{1922.e2}$, it certainly has a prime $p$ such that $\ord_p(u-1)\neq 0$ or $\ord_p(u+1)> 0.$ It remains to see what happens in the case $\ord_p(u+1)> 0.$

\begin{itemize}
	\item Assume that there exists a prime $p$ such that $k:=\ord_p(u+1)>0.$ We compute
	$$\res\left(u+1, \frac{\Delta(u)}{(u+1)^{14}}\right)=2^{82}, $$
	$$\res\left(u+1, c_4(u)\right)=2^{32}. $$
	For $p\neq 2$ this means that $p^{14k}\mid\Delta(u)$ and $p\nmid c_4(u)$ and from \Cref{prop:mult} we find that the reduction of $E$ at $p$ is multiplicative of type $I_{14k}.$ Having in mind that $u\equiv -1 \pmod p$, we get that $-c_6\equiv 2^{48} \pmod p,$ which is a square mod $p$, so by \Cref{lema2} we have $c_p=14k.$
\end{itemize}

\end{proof}

\begin{remark}
 In Proposition \ref{main} we proved that $c_E=14$ is the least possible value of the Tamagawa number of an elliptic curve defined over $\Q$ with torsion subgroup $\Z/2\Z\oplus \Z/14\Z$ over some cubic field. This is true only for elliptic curve $E=\lmfdbec{1922c1}{1922.e2}$. Consequently, for the same curve we get the least possible value amongst those curves of the ratio $c_E/\#E(\Q)_{tors}$ which appears as a factor in the leading term of the $L-$function of $E/\Q$ in the conjecture of Birch and Swinnerton-Dyer, in this case it is $c_E/\#E(\Q)_{tors}=\frac{14}{1}=14,$ since the curves defined over $\Q$ with torsion subgroup $\Z/2\Z\oplus \Z/14\Z$ over some cubic field have trivial torsion over $\Q.$
 
 Similarly, using Corollary \ref{kor} we see that the value of the Tamagawa number of an elliptic curve defined over a cubic field with torsion subgroup $\Z/2\Z\oplus \Z/14\Z$ is always divisible by $14^3.$ The value $c_E=14^3$ is actually a possible value, and it is achieved for the curve $E=\lmfdbec{1922c1}{1922.e2}$, which has the mentioned torsion over the cubic field $\Q(\alpha),$ where $\alpha$ is a root of the polynomial $x^3 + 2x^2 - 9x - 2$. This gives the smallest possibe ratio of $c_E/\#E(K)_{tors}=98$ for all such curves.
\end{remark}

\section{Tamagawa numbers of elliptic curves with prescribed isogeny}\label{iso}

In \cite[Table 3]{ALR} we can find the $j-$invariants of elliptic curves parameterized by points on modular curves $X_0(n)$ defined over $\Q,$ for $X_0(n)$ of genus $0$, and in \cite[Table 4]{ALR} there are $j-$invariants of elliptic curves parameterized by points on modular curves $X_0(n)$ defined over $\Q,$ with genus of $X_0(n)$ larger than $0$. In this section we will examine the properties of Tamagawa numbers of elliptic curves defined over $\Q$ with an $n-$isogeny, i.e., the properties of Tamagawa numbers of elliptic curves obtained from the mentioned $j-$invariants.

In Section \ref{tors} we worked with a specific model for the curve $X_1(2,14)$. However, the points on $X_0(n)$ give us $j-$invariants of curves with an $n-$isogeny, which give us elliptic curves up to a twist, so now, as opposed to the situation in Section \ref{tors}, we also have to take into consideration the twists of the curves we get from those $j-$invariants. Therefore, we will be interested in how the reduction types at primes $p\in\Q$ change under the twisting of the curve.

Let $E$ be an elliptic curve, which will always be defined over $\Q$ in this section. Denote by $E^d$ its quadratic twist by $d$, where $d$ is a squarefree integer. When $p\neq 2$, the reduction type change is quite straightforward, and is presented in \Cref{tab2}. In essence, if $p\nmid d,$ the reduction type does not change, and when $p\mid d,$ reduction types change as indicated in the third column.

\begin{table}[h]
	\begin{tabular}{|c|c|c|}
		\hline
		\begin{tabular}[c]{@{}c@{}}reduction type \\ of $E$ at $p$\end{tabular} & \begin{tabular}[c]{@{}c@{}}reduction type\\  of $E^d$ at $p\nmid d$\end{tabular}  &  \begin{tabular}[c]{@{}c@{}}reduction type\\  of $E^d$ at $p\mid d$\end{tabular}\\ \hline
		$I_0$                        &   $I_0$   &  $\:\:\:I_0^*\:$                   \\
		$\:I_m$                      &  $\:I_m$    &  $\:\:\:\:I_m^*\:$ 	                      \\
		$II$                         &     $II$  &      	$\:\:\:\:\:IV^*$                \\
		$\:\:III$                    &  $\:\:III$      &  	$\:\:\:\:\:\:III^*$                      \\
		$\:IV$                       &  $\:IV$         &   	$\:\:\:II^*$                \\
		$I_0^*\:$                    & $I_0^*\:$   &     $I_0$                         \\
		$\:I_m^*\:$                  & $\:I_m^*\:$  & 	$\:I_m$                            \\
		$\:\:IV^*$                   & 	$\:\:IV^*$   &    $II$                        \\
		$\:\:III^*$                      &    	$\:\:III^*$    &     $\:\:III$                     \\
		$\:\:II^*\:\:$                   &	$\:\:II^*\:\:$  &  $\:IV$                                   \\  \hline
	\end{tabular}
	\vspace{0,2cm}
	\caption{change of reduction types at $p\neq 2$ under twisting \cite[Prop.1]{SC}}\label{tab2}
\end{table}

When $p=2,$ the situation gets more complicated. As most of the relevant Tamagawa numbers we will have in the following proofs come from primes of multiplicative reduction, we give a lemma that will be especially useful for dealing with quadratic twists of a large family of elliptic curves with multiplicative reduction at $p=2.$

\begin{lemma}\label{lemma1}
	\textup{(\cite[Thm.A.5]{D2}, \cite[Thm.2.8]{DL2})} Let $E$ be an elliptic curve with multiplicative reduction of type $I_n$ at $p=2.$ Denote by $E^d$ the twist of $E$ by $d,$ where $d$ is a squarefree integer.
	\begin{itemize}
		\item[(a)] If $d\equiv 2,3\pmod{4},$ then the reduction of $E^d$ at $p$ is of type $I_n^*.$
		\item[(b)] If $d\equiv 1\pmod{4},$ then the reduction of $E^d$ at $p$ is of type $I_n.$
	\end{itemize}
\end{lemma}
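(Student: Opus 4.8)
The plan is to reduce the whole statement to a purely local computation at $2$ and to read off the Kodaira type from the theory of the Tate curve, exploiting the fact that an elliptic curve over $\Q_2$ with negative $j$-valuation is controlled up to isomorphism by its Tate parameter together with a quadratic twisting class. First I would observe that multiplicative reduction of type $I_n$ at $2$ forces $\ord_2(j_E)=3\ord_2(c_4)-\ord_2(\Delta)=-n<0$, by \Cref{prop:mult}. Since the $j$-invariant is a twist invariant, $j_{E^d}=j_E$, so $\ord_2(j_{E^d})=-n<0$ as well and $E^d$ also has potentially multiplicative reduction at $2$. For such curves the Kodaira type is necessarily $I_m$ (split or non-split multiplicative) or $I_m^*$ (additive, potentially multiplicative), and in all of these cases $\ord_2(j)=-m$. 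Comparing valuations forces $m=n$ for both $E$ and $E^d$; hence twisting can never alter the subscript $n$, and the entire content of the lemma is to decide whether $E^d$ lands in the multiplicative case $I_n$ or the additive case $I_n^*$.

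Next I would invoke the classification of curves with potentially multiplicative reduction. Over $\Q_2$ such a curve is isomorphic to a quadratic twist $E_q^{\gamma}$ of a Tate curve $E_q$, where $\ord_2(q)=n$ is pinned down by $j$ and $\gamma\in\Q_2^\times/(\Q_2^\times)^2$ is a twisting class, and the type is governed by $\gamma$: the trivial class gives split multiplicative $I_n$, the class of the unramified quadratic extension gives non-split multiplicative $I_n$, and any ramified class gives additive reduction $I_n^*$. Equivalently, $E_q^{\gamma}$ has multiplicative reduction exactly when $\gamma$ lies in the order-two subgroup $U\subset\Q_2^\times/(\Q_2^\times)^2$ consisting of the trivial class and the unramified class (represented by the squarefree integers $\equiv 1\pmod 4$), and type $I_n^*$ otherwise. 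Since $E$ is assumed to have multiplicative reduction $I_n$, its class $\gamma_E$ lies in $U$.

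Finally I would use that twisting by $d$ translates the class: $E^d$ corresponds to $\gamma_E\cdot[d]$, where $[d]$ is the image of $d$ in $\Q_2^\times/(\Q_2^\times)^2$. As $U$ is a subgroup and $\gamma_E\in U$, the product $\gamma_E\cdot[d]$ lies in $U$ if and only if $[d]\in U$, i.e. if and only if $\Q_2(\sqrt{d})/\Q_2$ is trivial or unramified. For a squarefree integer $d$ this is governed by the discriminant of $\Q(\sqrt{d})$: the prime $2$ is unramified exactly when $d\equiv 1\pmod 4$ (discriminant $d$) and ramifies exactly when $d\equiv 2,3\pmod 4$ (discriminant $4d$). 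Thus $E^d$ has type $I_n$ when $d\equiv 1\pmod 4$, giving (b), and type $I_n^*$ when $d\equiv 2,3\pmod 4$, giving (a). The step I expect to carry the real weight is the Tate-curve trichotomy at the wildly ramified prime $p=2$: one must verify that the correspondence between the trivial, unramified, and ramified twisting classes and the split multiplicative, non-split multiplicative, and additive ($I_n^*$) types survives wild ramification, and that the unramified subgroup $U$ is indeed the set of squarefree classes $\equiv 1\pmod 4$ inside the order-eight group $\Q_2^\times/(\Q_2^\times)^2$; once this is secured, the remainder is bookkeeping with $\Q_2^\times/(\Q_2^\times)^2$ and the quadratic discriminant.
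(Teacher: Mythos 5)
Your framework (Tate curves, twisting classes, and the unramified subgroup of $\Q_2^\times/(\Q_2^\times)^2$) is the natural one, and part (b) of your argument, as well as the dichotomy ``multiplicative versus starred type'' governed by whether $\Q_2(\sqrt{d})/\Q_2$ is unramified, is correct. But there is a genuine error, located exactly at the step you flagged as carrying the real weight: the claim that a potentially multiplicative curve over $\Q_2$ of type $I_m^*$ satisfies $\ord_2(j)=-m$, so that the subscript is pinned down by the $j$-invariant and hence unchanged by twisting. That relation holds only in odd residue characteristic; at $p=2$ a ramified quadratic twist is wildly ramified and the subscript jumps. Concretely, take $E=X_1(14)$, given by $y^2+xy+y=x^3-x$, with $c_4=25$ and $\Delta=-2^2\cdot 7$, so $E$ has type $I_2$ at $2$. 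Its twist by $d=-1$ has invariants $(c_4,c_6)=(25,253)$, and no integral Weierstrass model over $\Z_2$ realizes these (any integral model with odd $c_4$ forces $c_6\equiv -1\pmod 4$, whereas $253\equiv 1\pmod 4$), so the minimal discriminant of $E^{-1}$ at $2$ has valuation $2+12=14$; on the other hand its conductor exponent at $2$ is $2f(\chi_{-1})=4$. Ogg's formula gives $14-4+1=11$ components of the special fibre, and since $I_N^*$ has $N+5$ components, the type is $I_6^*$, not $I_2^*$. (Running Tate's algorithm confirms this: the first two double-root tests in the $I_m^*$ subprocedure both degenerate.) In general, the theorems you would need here give $I_{n+4}^*$ when $d\equiv 3\pmod 4$ and $I_{n+8}^*$ when $d\equiv 2\pmod 4$.

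For comparison, the paper does not prove Lemma \ref{lemma1} at all: it is imported from \cite[Thm.~A.5]{D2} and \cite[Thm.~2.8]{DL2}, so there is no internal argument to measure yours against -- and in fact the statement as printed shares the defect of your proof, since it too asserts the subscript $n$ is preserved, which misquotes those sources. None of this damages the paper or a repaired version of your argument: in the proofs of Propositions \ref{prop1}, \ref{10izo}, \ref{8izo} and \ref{6izo} the lemma is only used to conclude that a twist of a curve with reduction $I_{2n}$ at $2$ again has type $I_{2n}$ or some starred type $I_m^*$, and every $I_m^*$ with $m\geq 1$ has Tamagawa number $2$ or $4$ (Table \ref{table1}), hence even. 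So if you weaken the conclusion of your ramified case to ``type $I_m^*$ for some $m\geq 1$'' -- dropping the false valuation-theoretic identification of $m$ -- your Tate-curve argument becomes correct and fully adequate for every use the paper makes of the lemma.
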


For other types of reduction, some results can also be found in \cite[Section 2]{SC}. Since we will deal here with only finitely many explicitly known elliptic curves with non-multiplicative reduction at $p=2,$ for those curves we can simply check all of the possibilities for reduction type at $p=2$ of quadratic twists, since $\Q_2^\times/\left( \Q_2^\times\right)^2=\langle -1,2,5 \rangle$.


\begin{proposition}\label{prop1}
	Let $E$ be an elliptic curve over $\Q$ with an $18-$isogeny. Then $4|c_E,$ except for the curves $\lmfdbec{14a3}{14.a2}, \lmfdbec{14a4}{14.a5}, \lmfdbec{14a5}{14.a1}, \lmfdbec{14a6}{14.a4}$, where $ c_E=2$.
\end{proposition}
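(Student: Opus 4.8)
The plan is to realize every curve with an $18$-isogeny as a quadratic twist of a member of a one-parameter family, and then to exhibit two rational primes whose local Tamagawa numbers are forced to be even in a way that is stable under twisting; this mirrors the strategy of \Cref{main}. Since $X_0(18)$ has genus $0$, the $j$-invariants of curves with an $18$-isogeny are the values of a rational function $j(t)$, $t\in\Q$, which I would take from \cite[Table 3]{ALR}. A cyclic $18$-isogeny contains a Galois-stable subgroup of order $2$, so each such $E$ carries a rational $2$-torsion point; I would therefore fix a representative $E_t\colon y^2=x\bigl(x^2+a(t)x+b(t)\bigr)$ of each $j$-invariant and compute its discriminant $\Delta(t)$ and invariant $c_4(t)$, factoring $\Delta(t)$ into explicit polynomials in $\Z[t]$.

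The engine of the argument is the observation, read off from \Cref{table1}, that whenever $\ord_p(\Delta)$ is \emph{even} and the reduction is multiplicative, the Tamagawa number $c_p$ is even regardless of whether the reduction is split or non-split: split $I_{2k}$ gives $c_p=2k$, and non-split $I_{2k}$ gives $c_p=2$. I would accordingly search the factorization of $\Delta(t)$ for two distinguished factors (playing the role of the factors $(u\mp1)^{14}$ in Section~\ref{tors}) that occur to an even power. Using \Cref{prop:mult} together with resultant computations to rule out spurious coincidences between factors, exactly as in \Cref{main}, I would show that for all but finitely many $t$ there are two primes $p_1\neq p_2$ at which $E_t$ has multiplicative reduction of even type, so that $4\mid c_{E_t}$.

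The reason for insisting only on the parity of $\ord_p(\Delta)$ is that this condition survives twisting, which is what allows the passage from the base family to all curves with an $18$-isogeny. For a squarefree $d$ and the twist $E_t^d$, \Cref{tab2} shows that at odd $p\nmid d$ the reduction is unchanged, while at odd $p\mid d$ a type $I_m$ becomes $I_m^*$; \Cref{lemma1} shows that at $p=2$ the type stays $I_n$ when $d\equiv1\pmod4$ and becomes $I_n^*$ otherwise. In every one of these cases the Tamagawa number remains even: an unchanged $I_{2k}$ is handled as above, and any $I_m^*$ has Tamagawa number $2$ or $4$ by \Cref{table1}. Hence each of the two even contributions found for $E_t$ persists for every twist, and $4\mid c_E$ holds for all curves with an $18$-isogeny outside a finite set of degenerate parameters. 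I would then identify that set directly in Magma and verify that the only curves for which a single even factor remains, giving $c_E=2$, are $\lmfdbec{14a3}{14.a2}$, $\lmfdbec{14a4}{14.a5}$, $\lmfdbec{14a5}{14.a1}$, and $\lmfdbec{14a6}{14.a4}$.

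I expect the main obstacle to be the bookkeeping at the finitely many degenerate parameters and at the small primes $2,3,7$, where the two designated factors of $\Delta(t)$ can collide and the second even-type prime can be lost; this includes any fibre with $j(t)\in\{0,1728\}$, where twisting is no longer purely quadratic and must be treated by hand. Isolating these values of $t$, showing that they yield exactly the four conductor-$14$ exceptions, and checking that those genuinely have $c_E=2$ rather than a larger power of $2$, is where the argument is delicate. The prime $p=2$ requires the most care, since the two residue classes of $d$ in \Cref{lemma1} behave differently and the sharp examples all have bad reduction at $2$.
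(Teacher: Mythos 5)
Your strategy coincides with the paper's: the same parametrization from \cite[Table 3]{ALR}, the same key observation that multiplicative reduction of type $I_{2k}$ forces $2\mid c_p$ whether or not the reduction is split, the same use of even-exponent factors of $\Delta$ (the paper's are $(h+1)^2$ and $(h^2-h+1)^2$) with resultant computations, and the same twist-stability argument via \Cref{tab2}, \Cref{lemma1} and \Cref{table1}. The genuine gap is your claim that what remains is a \emph{finite} set of degenerate parameters that can be ``identified directly in Magma''. That set is not finite. The resultants $\res\left(h+1,c_4(h)\right)=3^{12}$ and $\res\left(h+1,h^2-h+1\right)=3$ control only primes $p\neq 3$, so for every parameter with $h+1=\pm 3^k$, or $h+1=\pm 3^kp^l$, or $h+1=\pm 3^k/p^l$ --- infinite families --- your two designated factors yield at most one prime of even multiplicative type. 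The paper spends most of its proof precisely here, and needs two ingredients your plan lacks: (i) for $h+1=\pm3^k$, $k>1$, a separate $3$-adic computation (rescaling $x\mapsto 3^6x$, $y\mapsto 3^9y$ and counting valuations of $\Delta$ and $c_4$) showing that the reduction at $3$ itself is multiplicative of type $I_{2k-2}$; and (ii) for $h+1=\pm 3/p^l$, a small Diophantine analysis of $\pm\frac{p^{2l}}{3}\mp p^l+1=\frac{1}{a^n}$, whose only solutions give $h\in\left\{-\frac{5}{2},\frac{1}{2}\right\}$ --- this is exactly where the exception \lmfdbec{14a3}{14.a2} (at $h=\frac{1}{2}$) comes from, so an argument that only treats ``generic'' parameters plus a finite computer check cannot even locate it.

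A second, smaller gap: your degenerate parameters determine curves only up to quadratic twist, so to conclude that the exceptional set is exactly the four listed curves (rather than those curves together with some of their twists) you must show that every nontrivial twist of \lmfdbec{14a3}{14.a2}, \lmfdbec{14a4}{14.a5}, \lmfdbec{14a5}{14.a1}, \lmfdbec{14a6}{14.a4} satisfies $4\mid c_E$. Since these curves have $c_E=c_2=2$, i.e., only one even local factor, your general twist-stability argument gives only $2\mid c_E$ for their twists; the paper closes this by explicitly enumerating the local twists at $2$ through $\Q_2^\times/\left(\Q_2^\times\right)^2=\langle -1,2,5\rangle$. You flag both difficulties in your final paragraph, which is to your credit, but the proposal as written does not contain the arguments that resolve them.
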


\begin{proof}
	From \cite[Table 3]{ALR} we take the parameterization of the $j-$invariants of the curves that are non-cuspidal points on $X_0(18),$
	$$j(h)=\dfrac{(h^3-2)^3(h^9-6h^6-12h^3-8)^3}{h^9(h^3-8)(h^3+1)^2}, h\in\Q.$$
	
	From it we can acquire the discriminant
	$$\Delta(h)=(h-2)h^9(h+1)^2(h^2 - h + 1)^2(h^2 + 2h + 4)f(h)$$
	and the $c_4-$invariant $c_4(h)$ of the minimal model up to a twist, which can be computed with the accompanying \href{https://web.math.pmf.unizg.hr/~atrbovi/magma/magma3/prop3.2.txt}{Magma code}. The polynomial $f(h)$ is a monic polynomial in $\Z[h].$
	
	Assume that there exists a prime $p$ such that $k:=\ord _p(h+1)>0.$ We compute
	$$\res\left(h+1, \frac{\Delta(h)}{(h+1)^2}\right)=3^{34}, $$
	$$\res\left(h+1, c_4(h)\right)=3^{12}. $$
	If $p\neq 3,$ this means that $p^{2k}\mid\Delta(h)$ and $p\nmid c_4(h)$, so from \Cref{prop:mult} we find that the reduction of $E$ at $p$ is multiplicative of type $I_{2k},$ and therefore $c_p$ is even (see \Cref{table1}).
	If there exists a second prime $p'$ distinct from $p$ and 3 with $k':=\ord _{p'}(h+1)>0,$ then we have another prime with  multiplicative reduction of type $I_{2k'}$ and therefore with even $c_{p'}.$

	Assume now that there exists a prime $p$ such that $k:=\ord _p(h+1)<0.$ We put $m:=\frac{1}{h+1}$ and after the substitution $x\mapsto x\cdot 3^{-6}, y\mapsto y\cdot 3^{-9}$ we get an elliptic curve with the discriminant	
	$$ \Delta(m)=(m - 1)^9(3m-1)m^{18}(3m^2 -3 m + 1)^2(3m^2 + 1)g(m)$$
	and the $c_4-$invariant $c_4(h)$ up to a twist, which can be computed with the accompanying \href{https://web.math.pmf.unizg.hr/~atrbovi/magma/magma3/prop3.2.txt}{Magma code}. The polynomial $g(m)$ is a monic polynomial in $\Z[m].$
	
	\noindent We compute
	$$\res\left(m, \frac{\Delta(m)}{m^{18}}\right)=1, $$
	$$\res\left(m, c_4(m)\right)=1. $$

	We see from \Cref{prop:mult} that the reduction at $p$ is multiplicative of type $I_{18k},$ with even $c_p$ (see \Cref{table1}). If we have another prime $p'\neq p$ with $k':=\ord _{p'}(h+1)<0,$ then the reduction at $p'$ is also multiplicative of type $I_{18k'}$ with even $c_{p'}.$
	
	Assume now that there exists at most one prime $p$ such that $\ord_p(h+1)\neq 0.$ That means that either $h+1=\pm p^k,$ where $k\in \Z, \: k\geq 0,$ or $m=\frac{1}{h+1}=\pm p^k,$ where $k\in \Z, \: k> 0.$ We consider the following cases:
	
	\begin{itemize}
		\item[(1)]  If $h+1=\pm p^k, p\neq 3,$ we have $\ord _{p}(h^2-h+1)=0,$ since $\res(h+1, h^2-h+1)=3$, where $h^2-h+1$ is one of the factors in the discriminant. Then there exists $p'\neq p,3$ such that $\ord_{p'}(h^2-h+1)>0$, unless we have $h^2-h+1\in\{\pm 1, \pm 3\},$ i.e., $h\in\{0,\pm 1, 2\}.$ For $h=1$ we get a twist of the curve \lmfdbec{14a4}{14.a5} which has $c_E=2,$ while for $h=0, -1,2$ we do not get an elliptic curve (look at the j-invariant).

		\item[(2)] Suppose $h+1=\pm 3^k.$ If $k=0$ we have $h+1=\pm 1,$ i.e., $h\in \{0,-2\}.$ We already know that $h$ cannot be 0, but for $h=-2$ we get a twist of the curve \lmfdbec{14a6}{14.a4}, for which we have $c_E=2.$ When $k=1$ we have $h+1=\pm 3,$ i.e., $h\in\{-4,2\}.$ For $h=-4$ we get a twist of \lmfdbec{14a5}{14.a1}, with $c_E=2,$ and $h=2$ cannot happen. Assume now that $h+1=\pm 3^k, k>1.$ Counting the multiplicities of 3 in $\Delta(h)$ and $c_4(h)$ we get that the factor $3^{2-2k}$ appears in the j-invariant. Furthermore, if we write $\pm 3^k-1$ instead of $h$ in the equation for $E$ and make the substitution $x\mapsto x\cdot 3^{6}, y\mapsto y\cdot 3^{9},$ we get a model where $\ord_3(c_4)=0,$ and it follows from \Cref{prop:mult} that for $k>1$ we have multiplicative reduction $I_{2k-2}$ at 3, with $c_3$ being even (see \Cref{table1}). Note that in any case we also have a prime $p\neq 3$ dividing $h^2-h+1$ in $\Delta(h)$ with multiplicative reduction $I_{2n},$ which makes $c_E$ divisible by 4.
		
		\item[(3)] 	If $m=\frac{1}{h+1}=\pm p^k,$ for some prime $p,$ and clearly $\ord _{p}(3m^2-3m+1)=0,$ since $\res(m, 3m^2-3m+1)=1$. Then there exists $p'\neq p$ such that $\ord_{p'}(3m^2-3m+1)>0.$ Otherwise, we have $3m^2-3m+1\in\{\pm 1\},$ i.e., $m\in\{0,1\}$ which only makes sense for $h=0$ but, as we noted earlier, $h$ cannot be 0.
		
	\end{itemize}

	The only thing left to consider is when we have only 2 primes with $\ord_p(h+1)\neq 0,$ one of which is 3 and divides the numerator; in other words the cases $h+1=\pm 3^kp^l$ and $h+1=\pm \frac{3^k}{p^l}, p\neq 3, k, l>0.$ From the reasoning in (2) above, it is clear that if $k>1,$ we have multiplicative reduction at 3 and from the part of the proof where we had $\ord_p(h+1)<0$ we see that the reduction is multiplicative at $p$ as well, which gives us $c_E$ that is divisible by 4. For $k=1,$ we have $h+1=\pm 3p^l$ or $h+1=\pm \frac{3}{p^l}.$ 
	
	\begin{itemize}
		\item If $h+1=\pm 3p^l,$ we have another prime $p'\neq p,3$ dividing $h^2-h+1$ in the discriminant (similarly as in (1)) with multiplicative reduction.
		
		\item 	If $h+1=\frac{1}{m}=\pm \frac{3}{p^l},$ we also have another prime $p'\neq p$ dividing the numerator of $3m^2-3m+1$ in the discriminant (as in (3)) with multiplicative reduction, except possibly when $3m^2-3m+1=\frac{1}{a^n}, a\in\mathbb{Z}, n>0$ (this situation couldn't have happened in (3), because we had $m\in\mathbb{Z}$). By putting $ \pm\frac{p^l}{3} $ instead of $m$, we get $$ \pm\dfrac{p^{2l}}{3}\mp p^l+1=\dfrac{1}{a^n}, $$ which only has solutions for $a=3, n=1, p=2, l=1,$ i.e., if $h\in\left\{-\frac{5}{2},\frac{1}{2}\right\}.$ For $h=\frac{1}{2}$ we get a twist of the elliptic curve \lmfdbec{14a3}{14.a2}, with $c_E=2,$ and for $h=-\frac{5}{2}$ we get a curve that already has 2 primes of reduction type $I_{2k}$, namely 2 and 13.
	\end{itemize}

	To conclude the proof of this proposition, it remains to see how these reduction types and Tamagawa numbers would change under the twisting of the curves. All even Tamagawa numbers mentioned in the proof above come from multiplicative reductions $I_{2n}$ at primes $p,$ so by using Table \ref{table1}, Table 2 and Lemma \ref{lemma1}, we conclude that all reduction types of twists at $p$ are either $I_{2n}$ or $I_{2n}^*,$ so the Tamagawa numbers stay even.
	
	As for the curves \lmfdbec{14a3}{14.a2}, \lmfdbec{14a4}{14.a5}, \lmfdbec{14a5}{14.a1} and \lmfdbec{14a6}{14.a4}, they have $c_E=c_2=2.$ By using the fact that $\Q_p^\times/\left( \Q_p^\times\right)^2=\langle -1,2,5 \rangle$, we explicitly compute all possible reduction types of quadratic twists at $p=2$ and conclude that for every twist of those curves $4\mid c_E.$

\end{proof}

\begin{proposition}\label{10izo}
	Let $E$ be an elliptic curve over $\Q$ with a $10-$isogeny. Then $2|c_E.$
\end{proposition}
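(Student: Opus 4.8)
The plan is to follow the template of Proposition \ref{prop1}. First I would take from \cite[Table 3]{ALR} the parameterization $j(h)$ of the $j$-invariants of non-cuspidal points on $X_0(10)$ (which has genus $0$), and from it compute, via the accompanying Magma code, the discriminant $\Delta(h)$ and the $c_4$-invariant $c_4(h)$ of a minimal model up to a twist. As in the $18$-isogeny case, the discriminant factors as a product of monic polynomials in $\Z[h]$, and the goal is to isolate a single factor, say $\phi(h)$, that occurs in $\Delta(h)$ with an \emph{even} exponent $2e$. The point of insisting on an even exponent is that once a prime $p$ divides $\phi(h)$ to order $k$ while $p \nmid c_4(h)$, Proposition \ref{prop:mult} forces multiplicative reduction of type $I_{2ek}$, and an even Kodaira index yields an even Tamagawa number in both the split and the non-split case (see Table \ref{table1}): in the split case $c_p = 2ek$, and in the non-split case $c_p = 2$.

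Next I would run the resultant argument. For a prime $p$ with $k := \ord_p(\phi(h)) > 0$, computing $\res(\phi(h), \Delta(h)/\phi(h)^{2e})$ and $\res(\phi(h), c_4(h))$ pins down the finite set of primes that could divide $\phi(h)$ simultaneously with the complementary part of $\Delta$ or with $c_4$; away from this small exceptional set one gets $p^{2ek} \mid \Delta(h)$ and $p \nmid c_4(h)$, hence type $I_{2ek}$ and even $c_p$. The case $\ord_p(\phi(h)) < 0$ is handled by passing to a reciprocal variable $m$ together with a change of variables $x \mapsto x\cdot p_0^{-a},\, y \mapsto y\cdot p_0^{-b}$ (for the relevant exceptional prime $p_0$) to recover a minimal model, after which the analogous resultants are trivial and the same conclusion follows.

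It then remains to treat the finitely many values of $h$ for which $\phi(h)$ carries no admissible prime, namely the cases where the relevant linear or quadratic expression equals $\pm 1$ or a pure power of the exceptional prime; here I would either exhibit a second even-exponent factor of $\Delta(h)$ that does supply a suitable prime, or verify the handful of resulting explicit curves directly (discarding the values of $h$ that do not give an elliptic curve by inspecting the $j$-invariant). Because the statement claims only $2 \mid c_E$, with no exceptional curves (unlike the $6$-, $8$- and $18$-isogeny cases), the crucial point is simply that at least one prime of even-index multiplicative reduction always survives. Finally, the behaviour under twisting is controlled exactly as at the end of Proposition \ref{prop1}: by Table \ref{tab2} for $p \neq 2$ and Lemma \ref{lemma1} for $p = 2$, a prime of type $I_{2ek}$ twists to type $I_{2ek}$ or $I_{2ek}^*$, and in either case (Table \ref{table1}) the Tamagawa number stays even; since every curve with a $10$-isogeny is a quadratic twist of one arising from the parameterization, this gives $2 \mid c_E$ in all cases.

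The main obstacle I anticipate is the edge-case bookkeeping of the previous paragraph: ensuring that no value of $h$ slips through with only odd-index multiplicative (or additive) reduction at every prime. A clean proof depends on the discriminant of $X_0(10)$ possessing a sufficiently rigid even-power factor, so that the residual exceptional $h$ are few and each is either ruled out as non-elliptic or handled by a second factor; this is precisely the step where the explicit polynomials and resultants computed in the Magma code do the real work.
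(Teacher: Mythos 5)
Your template is exactly the paper's: the factor you call $\phi(h)$ is $(h+1)$, which appears squared in $\Delta(h)=(h-4)h^5(h+1)^2(h^2-2h-4)^6(h^2-2h+2)^6f(h)$, the resultants come out as $5^{22}$ and $5^{8}$ (so the exceptional prime is $5$), and the reciprocal substitution and the twisting argument for multiplicative primes run just as you describe. The genuine gap is in your treatment of the exceptional case $h+1=\pm 5^k$ with $k\geq 2$: this is an \emph{infinite} family of curves, so ``verify the handful of resulting explicit curves directly'' is unavailable, and so is ``exhibit a second even-exponent factor.'' Indeed, the only other even-exponent factors of $\Delta(h)$ are $(h^2-2h-4)^6$ and $(h^2-2h+2)^6$, and since these do not occur in the denominator of $j(h)=(h^6-4h^5+16h+16)^3/\left((h+1)^2(h-4)h^5\right)$, the identity $c_4^3=j\cdot\Delta$ forces $(h^2-2h-4)^2(h^2-2h+2)^2\mid c_4(h)$; a prime dividing such a factor therefore has $\ord_p(c_4)>0$ and gives \emph{additive} reduction, to which \Cref{prop:mult} and Table \ref{table1} attach no parity guarantee. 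The paper closes this case by a different device you did not propose: rescaling the model and counting multiplicities of $5$ to show that the reduction at the exceptional prime $5$ itself is multiplicative of type $I_{2k-2}$, hence with even $c_5$, whenever $k>1$.

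A second, related gap is your closing claim that ``at least one prime of even-index multiplicative reduction always survives.'' This is false for this proposition: the leftover values $h\in\{-6,-2\}$ (the cases $k\in\{0,1\}$) give twists of \lmfdbec{768d3}{768.h1} and \lmfdbec{768d1}{768.h3}, whose only even local factor is $c_2=2$ coming from \emph{additive} reduction of type $III$ at $2$. Direct verification disposes of these two curves, but the statement must hold for all of their quadratic twists, and your twisting argument (Table \ref{tab2} together with Lemma \ref{lemma1}) applies only to multiplicative reduction; neither covers additive reduction at $p=2$. The paper fills this in by explicitly computing the reduction types at $2$ of all quadratic twists of these two curves, using $\Q_2^\times/\left(\Q_2^\times\right)^2=\langle -1,2,5\rangle$, and checking that $2\mid c_E$ in every case. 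Without these two repairs, your proof as outlined does not cover infinitely many curves in the family (the $h+1=\pm 5^k$, $k\geq 2$ cases) nor the twists of the two exceptional curves.
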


\begin{proof}
	From \cite[Table 3]{ALR} we take the parameterization of the $j-$invariants of the curves that are non-cuspidal points on $X_0(10),$	
	$$j(h)=\dfrac{(h^6-4h^5+16h+16)^3}{(h+1)^2(h-4)h^5}, h\in\Q.$$
	
	From it we can acquire the discriminant
	$$\Delta(h)=(h - 4)h^5(h + 1)^2(h^2 - 2h - 4)^6(h^2 - 2h + 2)^6f(h)$$
	and the $c_4-$invariant $c_4(h)$ up to a twist, which can be computed with the accompanying \href{https://web.math.pmf.unizg.hr/~atrbovi/magma/magma3/prop3.3.txt}{Magma code}. The polynomial $f(h)$ is a monic polynomial in $\Z[h].$

	Assume that there exists a prime $p$ such that $k:=\ord _p(h+1)>0.$ We compute
	$$\res\left(h+1, \frac{\Delta(h)}{(h+1)^2}\right)=5^{22}, $$
	$$\res\left(h+1, c_4(h)\right)=5^{8}. $$
	If $p\neq 5,$ this means that $p^{2k}\mid\Delta(h)$ and $p\nmid c_4(h),$ and we find from \Cref{prop:mult} that the reduction of $E$ at $p$ is multiplicative of type $I_{2k},$ and therefore $c_p$ is even (see \Cref{table1}).
	
	For the case $h+1=\pm 5^k,$ after the change of variables $x\mapsto x\cdot 5^{4}, y\mapsto y\cdot 5^{6},$ counting the multiplicities of 5 in $\Delta(h)$ and $c_4(h)$ we get that the factor $5^{2-2k}$ appears in the $j-$invariant, with $5\nmid c_4(h).$ Therefore, when $k>1,$ by \Cref{prop:mult} we have multiplicative reduction at 5 of type $I_{2k-2}$ with even $c_p$ (see \Cref{table1}). For $k\in\{0,1\}$ we have $h\in\{-6,-2,0,4\}.$ For the values $h\in\{0,4\}$ we do not have an elliptic curve, and for the values $h\in\{-6,-2\}$ we get twists of curves \lmfdbec{768d3}{768.h1} and \lmfdbec{768d1}{768.h3}, which have $c_E=2,$ both with bad prime 2 with reduction type $III,$ so $c_2=2.$

	Assume now that there exists a prime $p$ such that $k:=\ord_p(h+1)<0.$ We put $m:=\frac{1}{h+1}$ and after the substitution $x\mapsto x\cdot 5^{-4}, y\mapsto y\cdot 5^{-6}$ we get an elliptic curve with the discriminant	
	$$\Delta(m) =(m - 1)^5(5m - 1)m^{10}(5m^2 - 4m + 1)^6(5m^2 - 2m + 1)^9g(m)$$	
 and the $c_4-$invariant $c_4(m)$ up to a twist, which can be computed with the accompanying \href{https://web.math.pmf.unizg.hr/~atrbovi/magma/magma3/prop3.3.txt}{Magma code}. The polynomial $g(m)$ is a monic polynomial in $\Z[m].$
	
	\noindent We compute
	$$\res\left(m, \frac{\Delta(m)}{m^{10}}\right)=1, $$
	$$\res\left(m, c_4(m)\right)=1. $$
	We see by \Cref{prop:mult} that the reduction at $p$ is multiplicative of type $I_{10k},$ with even $c_p$ (see \Cref{table1}). 
	
	To conclude the proof of this proposition, it remains to see how these reduction types and Tamagawa numbers would change under the twisting of the curves. All even Tamagawa numbers mentioned in the proof above come from multiplicative reductions $I_{2n}$ at primes $p,$ so by using Table \ref{table1}, Table 2 and Lemma \ref{lemma1}, we conclude that all reduction types of twists at $p$ are either $I_{2n}$ or $I_{2n}^*,$ so the Tamagawa numbers stay even.
	
	As for the curves \lmfdbec{768d3}{768.h1} and \lmfdbec{768d1}{768.h3}, they have $c_E=c_2=2.$ By using the fact that $\Q_p^\times/\left( \Q_p^\times\right)^2=\langle -1,2,5 \rangle$, we explicitly compute all possible reduction types of quadratic twists at $p=2$ and conclude that for every twist of those curves $2\mid c_E.$ 
	
\end{proof}

\begin{proposition}\label{8izo}
	Let $E$ be an elliptic curve over $\Q$ with an $8-$isogeny. Then $2|c_E,$ except for the curves $\lmfdbec{15a7}{15.a4}$, $\lmfdbec{15a8}{15.a7}$, $\lmfdbec{48a4}{48.a5}$, where $c_E=1.$
\end{proposition}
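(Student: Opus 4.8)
The plan is to follow the template of the proofs of Propositions \ref{prop1} and \ref{10izo}. First I would take from \cite[Table 3]{ALR} the parameterization of the $j$-invariants of the non-cuspidal points on $X_0(8)$ and, via the accompanying Magma code, extract the discriminant $\Delta(h)$ and the $c_4$-invariant $c_4(h)$ of a minimal model up to a quadratic twist. Since $8 = 2^3$, the only prime that can appear in the relevant resultants is $2$, so I expect $p = 2$ to be the distinguished prime, playing the role that $3$ and $5$ played for the $18$- and $10$-isogenies. I would then single out a repeated factor $\ell(h)$ of $\Delta(h)$ (most plausibly $\ell(h) = h$, as in the analogous $16$-isogeny computation) appearing to an even exponent $e$. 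For a prime $p$ with $k := \ord_p(\ell(h)) > 0$ I would verify that $\res(\ell(h),\, \Delta(h)/\ell(h)^{e})$ and $\res(\ell(h),\, c_4(h))$ are both powers of $2$; for $p \neq 2$ this gives $p^{ek} \mid \Delta(h)$ and $p \nmid c_4(h)$, so by \Cref{prop:mult} the reduction at $p$ is multiplicative of type $I_{ek}$ with $e$ even, whence $c_p$ is even by \Cref{table1}. The reciprocal range $\ord_p(\ell(h)) < 0$ is handled by the substitution $m := 1/\ell(h)$ followed by an appropriate rescaling by powers of $2$, after which the analogous resultants are again trivial away from $2$. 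Thus, as soon as $\ell(h)$ has any prime divisor other than $2$, a factor of $2$ in $c_E$ is forced.

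The main obstacle, exactly as in Proposition \ref{10izo}, is the residual case in which $\ell(h) = \pm 2^{k}$, so that the only prime touching $\ell(h)$ is the distinguished prime $2$. Here I would count the $2$-adic valuations of $\Delta(h)$ and $c_4(h)$ after a suitable change of variables to read off the reduction at $2$, expecting even multiplicative reduction for all sufficiently large $k$. The finitely many small values of $k$ yield a short explicit list of parameters, among which the curves \lmfdbec{15a7}{15.a4}, \lmfdbec{15a8}{15.a7} and \lmfdbec{48a4}{48.a5} should surface as the genuine exceptions with $c_E = 1$. For these the reduction at every bad prime contributes only an odd or trivial Tamagawa number (e.g.\ additive reduction, or good reduction at $2$ despite the $2$-power isogeny), so no factor of $2$ is forced; because the isogeny degree is a pure power of $2$, this analysis at $p = 2$ is the most delicate part of the argument, and I would verify each of the three exceptions directly with Tate's algorithm.

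Finally, to pass from the representative curves to all of their twists, I would invoke the twisting analysis. Every even Tamagawa number produced above arises from a multiplicative reduction of type $I_{2n}$, so by Table \ref{table1}, Table 2 and \Cref{lemma1} the twisted reduction at such a prime is either $I_{2n}$ or $I_{2n}^{*}$, and $c_p$ remains even. For the three exceptional curves with $c_E = 1$ I would instead check all quadratic twists at $p = 2$ explicitly, using $\Q_2^\times / (\Q_2^\times)^2 = \langle -1, 2, 5 \rangle$, to determine the Tamagawa numbers of their twists and confirm the exceptions stated.
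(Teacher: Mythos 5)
Your proposal follows essentially the same route as the paper's proof: the same parameterization of $X_0(8)$ from \cite[Table 3]{ALR}, the distinguished factor $\ell(h)=h$ whose resultants against $\Delta(h)/h^2$ and $c_4(h)$ are pure powers of $2$, the substitution $m=1/h$ with a rescaling by powers of $2$ for the case $\ord_p(h)<0$, the residual case $h=\pm 2^k$ yielding the three exceptional curves for small $k$, and the identical twisting argument via Table \ref{table1}, Table \ref{tab2}, Lemma \ref{lemma1} and the explicit check over $\Q_2^\times/\left(\Q_2^\times\right)^2=\langle -1,2,5\rangle$. The only detail left implicit in your plan is the non-exceptional curve \lmfdbec{24a3}{24.a2} arising at $h=\pm 8$ with $c_E=2$, whose quadratic twists the paper also verifies separately to have even Tamagawa number.
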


\begin{proof}
	From \cite[Table 3]{ALR} we take the parameterization of the $j-$invariants of the curves that are non-cuspidal points on $X_0(8),$	
	$$j(h)=\dfrac{(h^4-16h^2+16)^3}{(h^2-16)h^2}, h\in\Q.$$
	
	From it we can acquire the discriminant
	$$\Delta(h)=(h - 4)h^2(h + 4)f(h)$$
and the $c_4-$invariant $c_4(h)$ up to a twist, which can be computed with the accompanying \href{https://web.math.pmf.unizg.hr/~atrbovi/magma/magma3/prop3.4.txt}{Magma code}. The polynomial $f(h)$ is a monic polynomial in $\Z[h].$

	Assume that there exists a prime $p$ such that $k:=\ord_p(h)>0.$ We compute
	$$\res\left(h, \frac{\Delta(h)}{h^2}\right)=2^{64}, $$
	$$\res\left(h, c_4(h)\right)=2^{24}. $$
	If $p\neq 2,$ then this means that $p^{2k}\mid\Delta(h)$ and $p\nmid c_4(h),$ and from \Cref{prop:mult} we find that the reduction of $E$ at $p$ is multiplicative of type $I_{2k},$ and therefore $c_p$ is even (see \Cref{table1}).
	
	For the case $h=\pm 2^k,$ after the change of variables $x\mapsto x\cdot 2^{12}, y\mapsto y\cdot 2^{18},$ counting the multiplicities of 2 in $\Delta(h)$ and $c_4(h)$ we get that the factor $2^{8-2k}$ appears in the $j-$invariant, with $2\nmid c_4(h).$ Therefore, when $k>4,$ we have multiplicative reduction at 2 of type $I_{2k-8}$ with even $c_p,$ by \Cref{prop:mult} and \Cref{table1}. For $k=0$ we have $h=\pm 1$ and for the both values we get a twist of the curve \lmfdbec{15a8}{15.a7} with $c_E=1.$ For $k=1$ we have $h=\pm 2,$ i.e., a curve \lmfdbec{48a4}{48.a5} up to a twist, with $c_E=1.$ When $k=2$ we do not get a curve and, for $k=3$ and $h=\pm 8$ we have a twist of \lmfdbec{24a3}{24.a2}, where $c_E=2,$ and finally for $k=4$ and $h=\pm 16$ we have a twist of \lmfdbec{15a7}{15.a4}, where $c_E=1.$

	Assume now that there exists a prime $p$ such that $k:=\ord _p(h)<0.$ We put $m:=\frac{1}{h}$ and after the substitution $x\mapsto x\cdot 2^{-12}, y\mapsto y\cdot 2^{-18}$ we get an elliptic curve with the discriminant	
	\begin{equation*}
	\Delta(m) = -(4m - 1)m^8(4m + 1)g(m)
	\end{equation*}
and the $c_4-$invariant $c_4(m)$ up to a twist, which can be computed with the accompanying \href{https://web.math.pmf.unizg.hr/~atrbovi/magma/magma3/prop3.4.txt}{Magma code}. The polynomial $g(m)$ is a monic polynomial in $\Z[m].$

\noindent We compute
	$$\res\left(m, \frac{\Delta(m)}{m^{8}}\right)=1, $$
	$$\res\left(m, c_4(m)\right)=1. $$
	We see from \Cref{prop:mult} and \Cref{table1} that the reduction at $p$ is multiplicative of type $I_{8k},$ with even $c_p.$ 
	
	To conclude the proof of this proposition, it remains to see how these reduction types and Tamagawa numbers would change under the twisting of the curves. All even Tamagawa numbers mentioned in the proof above come from multiplicative reductions $I_{2n}$ at primes $p,$ so by using Table \ref{table1}, Table 2 and Lemma \ref{lemma1}, we conclude that all reduction types of twists at $p$ are either $I_{2n}$ or $I_{2n}^*,$ so the Tamagawa numbers stay even.
	
	As for the curves \lmfdbec{15a7}{15.a4}, \lmfdbec{15a8}{15.a7} and \lmfdbec{48a4}{48.a5}, they have $c_E=1.$ By using the fact that $\Q_p^\times/\left( \Q_p^\times\right)^2=\langle -1,2,5 \rangle$, we explicitly compute all possible reduction types of quadratic twists at $p=2$ and conclude that for every twist of those curves $2\mid c_E.$ Lastly, for every twist of the curve \lmfdbec{24a3}{24.a2} we have $2\mid c_E.$ 
	
\end{proof}

\begin{proposition}\label{6izo}
	Let $E$ be an elliptic curve over $\Q$ with a $6-$isogeny. Then $2|c_E,$ except for the curve $\lmfdbec{20a2}{20.a3}$, where $c_E=3,$  and also the curves $\lmfdbec{80b2}{80.b3}$, $ \lmfdbec{80b4}{80.b1},$ $\lmfdbec{20a4}{20.a1}$, $\lmfdbec{27a3}{27.a4}$ and infinitely many twists of $\lmfdbec{27a3}{27.a4}$, for which $c_E=1$.
\end{proposition}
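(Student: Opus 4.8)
The plan is to adapt the method of Propositions \ref{10izo} and \ref{8izo}, since here too it suffices to produce a single rational prime with even local Tamagawa number. I would begin with the parameterization of the $j$-invariants of non-cuspidal points on $X_0(6)$ from \cite[Table 3]{ALR}, and from it extract the discriminant $\Delta(h)$ and the invariant $c_4(h)$ of the associated model up to twist. As $6=2\cdot 3$, the two distinguished primes are expected to be $2$ and $3$. The engine of the argument is the observation already exploited in the earlier proofs: if at a prime $p$ the reduction is multiplicative of type $I_n$ with $n$ \emph{even}, then $c_p$ is even irrespective of whether the reduction is split or non-split (see \Cref{table1}). Accordingly, I would single out a factor of $\Delta(h)$ occurring with even multiplicity, compute the resultants $\res(\cdot,\Delta/\cdot)$ and $\res(\cdot,c_4)$ to verify that their only common prime divisors are $2$ and $3$, and conclude via \Cref{prop:mult} that for any prime $p\neq 2,3$ dividing this factor the reduction is of type $I_{2k}$, hence $c_p$ is even. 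The substitution $m=1/(\cdot)$ disposes of the case of negative valuation, yielding type $I_{6k}$ as in the earlier propositions.

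The second step is to clear away the finitely many sporadic values of $h$ for which the chosen factor is of the form $\pm 2^a 3^b$, so that no prime $p\neq 2,3$ is produced. Here I would track the exponents of $2$ and $3$ in $\Delta(h)$ and $c_4(h)$ after suitable scalings $x\mapsto x\cdot 2^{i}3^{j}$, $y\mapsto y\cdot 2^{i'}3^{j'}$, exactly as in Proposition \ref{8izo}, detecting multiplicative reduction of even type at $2$ or $3$ once the exponent is large and pinning down the remaining small exponents explicitly. I expect this to surface the sporadic curves \lmfdbec{20a2}{20.a3}, \lmfdbec{80b2}{80.b3}, \lmfdbec{80b4}{80.b1} and \lmfdbec{20a4}{20.a1}, whose Tamagawa numbers $3,1,1,1$ I would read off directly; the value $c_E=3$ for \lmfdbec{20a2}{20.a3} signals a prime of additive type $IV$ or $IV^*$.

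The hard part will be the locus where the parameterization produces $j=0$, giving the curve \lmfdbec{27a3}{27.a4} together with its quadratic twists, all of which retain the $6$-isogeny. Because a curve with $j=0$ has potentially good reduction and purely \emph{additive} reduction at its bad primes, the generic ``even type $I_n$'' argument collapses entirely: by \Cref{table1} the local index at a prime of type $II,II^*,IV,IV^*$ may be $1$. I would therefore analyze this one-parameter family of twists uniformly, using the standard classification of the additive reduction types of $j=0$ curves, and show that for infinitely many twists every bad prime contributes a trivial local index, so that $c_E=1$. These are precisely the genuine exceptions ``infinitely many twists of \lmfdbec{27a3}{27.a4}'' recorded in the statement, and isolating them is delicate because it cannot be reduced to a single resultant computation but requires a congruence analysis of the twisting parameter.

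Finally I would settle the behavior under twisting for all remaining curves. Every even Tamagawa number obtained above comes from multiplicative reduction of type $I_{2n}$; by \Cref{lemma1} together with \Cref{table1} and \Cref{tab2}, each quadratic twist then has reduction of type $I_{2n}$ or $I_{2n}^*$ at the relevant prime, and in both cases $c_E$ remains even. For the finitely many sporadic curves exhibiting additive reduction at $2$, I would instead enumerate all quadratic twists at $p=2$ directly using $\Q_2^\times/(\Q_2^\times)^2=\langle -1,2,5\rangle$, confirming that apart from the explicitly listed exceptions every twist satisfies $2\mid c_E$.
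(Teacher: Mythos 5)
Your generic step matches the paper's proof exactly: the even-multiplicity factor of $\Delta(h)$ is $(h+9)^2$, the resultants with $\Delta(h)/(h+9)^2$ and $c_4(h)$ are pure powers of $3$, the substitution $m=1/(h+9)$ handles negative valuation (type $I_{6k}$), and the sporadic values come from $h+9=\pm 3^k$ with $k\le 2$; likewise your use of Lemma \ref{lemma1} and Table \ref{tab2} for twists whose even local index comes from multiplicative reduction $I_{2n}$ is the paper's. The first genuine gap is in your final step, where you propose to handle the twists of the sporadic curves only by enumerating local twists at $p=2$ via $\Q_2^\times/(\Q_2^\times)^2$. That cannot work for \lmfdbec{20a2}{20.a3}, \lmfdbec{80b4}{80.b1}, or \lmfdbec{36a2}{36.a2} (the latter, with $c_E=6$, arises at $h=-12$ and is absent from your list): a twist of \lmfdbec{20a2}{20.a3} by $d$ has odd local index at $2$ in general, since the additive types $II, II^*, IV, IV^*$ all carry $c_2\in\{1,3\}$, so the parity of $c_E$ must be located elsewhere. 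The paper locates it at the good-reduction primes $p\mid d$: there the twist acquires type $I_0^*$, and Tate's algorithm \cite[\S 8. Case 6)]{T} gives $c_p=1+\#\{\text{roots of } P(T) \text{ in } k_p\}$, where $P(T)=T(T^2+T-1)$ for \lmfdbec{20a2}{20.a3} and $P(T)=(T-4)(T^2+3T-29)$ for \lmfdbec{80b4}{80.b1}; the rational root forces $c_p\in\{2,4\}$. This is precisely the step that confines any further exceptions to twists supported on $\{-1,2,5\}$, and it is how \lmfdbec{80b2}{80.b3} and \lmfdbec{20a4}{20.a1} are found: your proposal lists these curves but contains no mechanism to produce them or to certify that no other twists are exceptional. (For \lmfdbec{36a2}{36.a2} the persistence of evenness comes from type $III$ at $3$, via Table \ref{tab2} and Table \ref{table1}, again not from $p=2$.)

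The second and more serious gap is your plan for \lmfdbec{27a3}{27.a4}. You correctly single it out as the hard case, but the tool you name, a ``congruence analysis of the twisting parameter,'' provably cannot succeed. For a twist by a prime $d\neq 3$ the relevant local index is $c_d=1+\#\{\text{roots of } T^3+11664 \bmod d\}$. For $d\equiv 2\pmod 3$ this always equals $2$; for $d\equiv 1\pmod 3$ it equals $1$ or $4$ according to whether the cubic remains irreducible or splits completely modulo $d$, and since the splitting field of $T^3+11664$ is an $S_3$-extension, no set of congruence conditions on $d$ can separate these two cases — this is exactly why the paper cites \cite{W}. The paper resolves the difficulty with the Frobenius density theorem \cite{LS}: the primes modulo which the cubic stays irreducible have density $1/3$, hence there are infinitely many primes $d$ with $c_d=1$; combining this with $c_3=1$ (type $II$ is preserved when $3\nmid d$, Table \ref{tab2}) and $c_2=1$ (a finite local check) yields infinitely many twists with $c_E=1$. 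Without replacing congruences by a density theorem, your argument cannot establish the one infinite family of exceptions in the statement.
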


\begin{proof}
	From \cite[Table 3]{ALR} we take the parameterization of the $j-$invariants of the curves that are non-cuspidal points on $X_0(6),$	
	$$j(h)=\dfrac{(h+6)^3 (h^3+18h^2+84h+24)^3}{h(h+8)^3 (h+9)^2}, h\in\Q.$$
	
	From it we can acquire the discriminant
	$$\Delta(h)=h(h + 6)^6(h + 8)^3(h + 9)^2f(h),$$and the $c_4-$invariant $c_4(h)$ up to a twist, which can be computed with the accompanying \href{https://web.math.pmf.unizg.hr/~atrbovi/magma/magma3/prop3.5.txt}{Magma code}. The polynomial $f(h)$ is a monic polynomial in $\Z[h].$

	Assume that there exists a prime $p$ such that $k:=\ord_p(h+9)>0.$ We compute
	$$\res\left(h+9, \frac{\Delta(h)}{(h+9)^2}\right)=3^{32}, $$
	$$\res\left(h+9, c_4(h)\right)=3^{12}. $$
	If $p\neq 3,$ this means that $p^{2k}\mid\Delta(h)$ and $p\nmid c_4(h),$ and from \Cref{prop:mult} we find that the reduction of $E$ at $p$ is multiplicative of type $I_{2k},$ and therefore $c_p$ is even (see \Cref{table1}).
	
	For the case $h+9=\pm 3^k,$ after the change of variables $x\mapsto x\cdot 3^{6}, y\mapsto y\cdot 3^{9},$ counting the multiplicities of 3 in $\Delta(h)$ and $c_4(h)$ we get that the factor $3^{4-2k}$ appears in the $j-$invariant, with $3\nmid c_4(h).$ Therefore, when $k>2,$ we have multiplicative reduction at 3 of type $I_{2k-4}$ with even $c_p,$ by \Cref{prop:mult} and \Cref{table1}. For $k=0$ we have $h\in\{-10,-8\}.$ With $h=-10$ we have a twist of the elliptic curve \lmfdbec{20a2}{20.a3} which has $c_E=3,$ coming from the reduction at 2 of type $IV,$ and $h=-8$ does not give us an elliptic curve. For $k=1$ we have $h\in\{-12,-6\}.$ For $h=-12$ we get the curve \lmfdbec{36a2}{36.a2} with $c_E=6$ and for $h=-6$ we have \lmfdbec{27a3}{27.a4}, where $c_E=1.$ Lastly, if $k=2,$ then $h\in\{-18,0\}.$ For $h=0$ we do not get an elliptic curve, but for $h=-18$ we get a twist of the curve \lmfdbec{80b4}{80.b1}, with $c_E=1.$

	Assume now that there exists a prime $p$ such that $k:=\ord _p(h+9)<0.$ We put $m:=\frac{1}{h+9}$ and after the substitution $x\mapsto x\cdot 3^{-6}, y\mapsto y\cdot 3^{-9}$ we get an elliptic curve with the discriminant
	\begin{equation*}
	\Delta(m) = (m - 1)^3(3m - 1)^6(9m - 1)m^6g(m)
	\end{equation*}
	and the $c_4-$invariant $c_4(m)$ up to a twist, which can be computed with the accompanying \href{https://web.math.pmf.unizg.hr/~atrbovi/magma/magma3/prop3.5.txt}{Magma code}. The polynomial $g(m)$ is a monic polynomial in $\Z[m].$
	
	\noindent We compute
	$$\res\left(m, \frac{\Delta(m)}{m^{6}}\right)=1, $$
	$$\res\left(m, c_4(m)\right)=1. $$
	We see that the reduction at $p$ is multiplicative of type $I_{6k},$ with even $c_p,$ by \Cref{prop:mult} and \Cref{table1}.

To conclude the proof of this proposition, it remains to see how these reduction types and Tamagawa numbers would change under the twisting of the curves. All even Tamagawa numbers mentioned in the proof above come from multiplicative reductions $I_{2n}$ at primes $p,$ so by using Table \ref{table1}, Table 2 and Lemma \ref{lemma1}, we conclude that all reduction types of twists at $p$ are either $I_{2n}$ or $I_{2n}^*,$ so the Tamagawa numbers stay even.

The curve \lmfdbec{36a2}{36.a2} already has reduction type $III$ at $3$, which stays the same under twisting or changes to $III^*$, as stated in Table \ref{tab2}. In any case we get $c_3=2$ (see Table \ref{table1}). 

As for the curve \lmfdbec{20a2}{20.a3}, we examine all twists by $d$ such that there exists a prime $p\mid d$ such that \lmfdbec{20a2}{20.a3} has good reduction at $p$, i.e., $p\neq 2,5.$ At such $p$ we have reduction type $I_0$, and after twisting by $p$ we get that the reduction type of the twist at $p$ is $I_0^*$, as stated in Table \ref{tab2}. By Tate's algorithm \cite[\S 8. Case 6)]{T} we get that $c_p=1+\text{number of roots of $P(T)$ in $k_p$}$, where $P(T)=T^3+T^2-T=T(T^2+T-1).$ Polynomial $T^2+T-1$ has roots modulo $p$ if and only if $5$ is a quadratic residue modulo $p.$ Therefore, we get $$c_p=\begin{cases}
2, \text{ if $p\equiv 2,3\pmod 5$}\\4, \text{ if $p\equiv 1,4\pmod 5$}
\end{cases}.$$ It remains to see what happens with the twists by $d$, where $d$ has no divisors of good reduction for the curve \lmfdbec{20a2}{20.a3}. Since the only primes of bad reduction are $2$ and $5$, by explicitly computing all possible twists we get that \lmfdbec{20a2}{20.a3}, where $c_E=3$ and \lmfdbec{80b2}{80.b3}, where $c_E=1,$ are the only possible twists for which $2\nmid c_E.$


We use the same approach with the curve \lmfdbec{80b4}{80.b1}, which has $c_E=1.$ We examine all twists by $d$ such that there exists a prime $p\mid d$ such that \lmfdbec{80b4}{80.b1} has good reduction at $p$, i.e., $p\neq 2,5.$ At such $p$ we have reduction type $I_0$, and after twisting by $p$ we get that the reduction type of the twist at $p$ is $I_0^*$, as stated in Table \ref{tab2}. By Tate's algorithm \cite[\S 8. Case 6)]{T} we get that $c_p=1+\text{number of roots of $P(T)$ in $k_p$}$, where $P(T)=T^3 - T^2 - 41T + 116=(T-4)(T^2+3T-29).$ Polynomial $T^2+3T-29$ has roots modulo $p$ if and only if $5$ is a quadratic residue modulo $p.$ Therefore, we get $$c_p=\begin{cases}
2, \text{ if $p\equiv 2,3\pmod 5$}\\4, \text{ if $p\equiv 1,4\pmod 5$}
\end{cases}.$$ It remains to see what happens with the twists by $d$, where $d$ has no divisors of good reduction for the curve \lmfdbec{80b4}{80.b1}. Since the only primes of bad reduction are $2$ and $5$, by explicitly computing all possible twists we get that \lmfdbec{80b4}{80.b1} and \lmfdbec{20a4}{20.a1}, where $c_E=1,$ are the only possible twists for which $2\nmid c_E.$

For the curve \lmfdbec{27a3}{27.a4}, which has $c_E=1$, the situation is more complicated, and we will prove that the curve has infinitely many twists with Tamagawa number 1.

For \lmfdbec{27a3}{27.a4} we have that $c_{E}=c_3=1$, where $3$ is the only prime of bad reduction. Similarly as for the curves \lmfdbec{20a2}{20.a3} and \lmfdbec{80b4}{80.b1}, we are interested in what happens with the Tamagawa numbers of twists $E^d$ when $p\mid d$, for some prime $p$ of good reduction for $E$, in this case $p\neq 3$. At such $p$ we have reduction type $I_0$, and after twisting by $d$ we get that the reduction type of the twist at $p$ is $I_0^*$, as stated in Table \ref{tab2}. By Tate's algorithm \cite[\S 8. Case 6)]{T} we get that $c_p=1+\text{number of roots of $P(T)$ in $k_p$}$, where $P(T)=T^3+11664.$ The Tamagawa number at $p$ when the reduction type is $I_0^*$ can be $1$, $2$ or $4$ (see Table \ref{table1}). As opposed to the aforementioned two curves, all of those cases are possible here, as noted in Table \ref{table4}.

Furthermore, the Galois group of the polynomial $P$ is $S_3$, hence non-abelian, which means that we have no straightforward description in terms of congruences of the primes $p$ for which there is a root modulo $p$ \cite[pp.576]{W}. However, using Frobenius' density theorem \cite[pp.32]{LS} we know that the density of all primes $p$ such that $P$ remains irreducible modulo $p$ is $\frac{1}{3}.$ That means that we have infinitely many primes $p$ such that $c_p=1.$ It remains to see that $c_E=1$ as well.

The prime $3$ is the only prime of bad reduction for $E$ and the reduction type of $E$ at $3$ is $II.$ The reduction type at $3$ of $E^d$ when $3\nmid d$ stays the same (see Table \ref{tab2}) and hence the Tamagawa number at $3$ stays $1$. As for the prime $p=2$, by using the fact that $\Q_p^\times/\left( \Q_p^\times\right)^2=\langle -1,2,5 \rangle$, we explicitly compute all possible reduction types of quadratic twists at $p=2$ and conclude that for every twist of those curves $c_2=1.$

Therefore, we have that $c_{E^d}=c_d=1$ for $d$ in the set of primes for which $P \text{ mod } p$ stays irreducible.
\end{proof}

\begin{example}
 Denote by $E$ the curve \lmfdbec{27a3}{27.a4}. We have that $c_{E}=c_3=1$, where $3$ is the only prime of bad reduction. As noted in the proof of Proposition \ref{6izo}, if $p$ is a prime of good reduction for $E$, then by Tate's algorithm \cite[\S 8. Case 6)]{T} we get that the Tamagawa number of $E^d$ at $p$ such that $p\mid d$ is $c_p=1+\text{number of roots of $P(T)$ in $k_p$}$, where $P(T)=T^3+11664.$ The curve $E^d$ has reduction type $I_0^*$ at $p$ and the Tamagawa number at $p$ can be $1$, $2$ or $4$ (see Table \ref{table1}), depending on the number of roots of $P \text{ mod } p$. All of those cases are possible here, and they are presented in Table \ref{table4}.
 
 \begin{table}[h]
 	\begin{tabular}{|c|c|c|c|c|}
 		\hline
 		twist of \lmfdbec{27a3}{27.a4} by $d$  & curve & reduction type at $d$ & $c_d$ & $c_E$ \\ \hline
 		$d=7$ & \lmfdbec{21168cx1}{21168.bv4} & $I_0^*$ & $1$&$1$  \\  
 		$d=5$ & \lmfdbec{675a1}{675.e4} & $I_0^*$ & $2$  & $2$  \\ 
 		$d=31$ & \lmfdbec{415152ci1}{415152.ci4} & $I_0^*$ & $4$ & $4$ \\  \hline
 	\end{tabular}
 	\vspace{0,2cm}
 	\caption{some twists of \lmfdbec{27a3}{27.a4} and their Tamagawa numbers}\label{table4}
 \end{table}
All computations made in this example can be verified using the accompanying \href{https://web.math.pmf.unizg.hr/~atrbovi/magma/magma3/ex3.6.txt}{Magma code}.


	
	

\end{example}

\begin{proposition}\label{ostalo}
	Let $E$ be an elliptic curve over $\Q$ with an $n-$isogeny, $n\in\{14,17,19,37,43,67,163\}$. Then $2|c_E.$
\end{proposition}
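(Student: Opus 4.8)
The decisive difference from Propositions \ref{prop1}--\ref{6izo} is that for $n\in\{14,17,19,37,43,67,163\}$ the modular curve $X_0(n)$ has genus at least $1$: for $n=14,17,19$ it is an elliptic curve of rank $0$, and for $n=37,43,67,163$ it has genus $\geq 2$, so by Faltings and the explicit determination of the rational points recorded in \cite[Table 4]{ALR} the set of non-cuspidal $\Q$-points is finite. Thus, in contrast to the $\P^1$-families of the previous propositions, the plan is to treat the finitely many $j$-invariants of \cite[Table 4]{ALR} one at a time. Each of them determines an elliptic curve only up to quadratic twist, so for every such $j$ I must exhibit a prime contributing an even factor to $c_E$ that persists under all twists $E^d$ with $d$ squarefree.

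The first mechanism handles any $j$ admitting a prime $p$ with $-\ord_p(j)=2k>0$. There the curve, and every twist of it, has potentially multiplicative reduction at $p$, of type $I_{2k}$ if $\ord_p(c_4)=0$ (by \Cref{prop:mult}) and of type $I_{2k}^*$ if $\ord_p(c_4)>0$; in both cases $c_p$ is even by \Cref{table1}. Twisting keeps the type inside $\{I_{2k},I_{2k}^*\}$---by \Cref{tab2} at $p\neq 2$ and by \Cref{lemma1} at $p=2$---so $2\mid c_{E^d}$ for every $d$. This settles every $j$-invariant of \cite[Table 4]{ALR} possessing such a prime, which is located simply by factoring the finitely many $j$-invariants.

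The $j$-invariants not covered by the first mechanism---in particular the integral CM values (CM by $-7,-28$ for $n=14$ and by $-19,-43,-67,-163$ for the corresponding primes)---give curves with potentially good reduction, for which the valuation argument is vacuous and I instead run Tate's algorithm at each of the finitely many bad primes. Since $j\neq 0,1728$ for every such curve, its geometric automorphism group is $\{\pm 1\}$, so at an odd bad prime $\ell$ the reduction has type $I_0^*$, and \cite[\S 8. Case 6)]{T} gives $c_\ell=1+\#\{\text{roots of }P(T)\text{ in }k_\ell\}$ for an explicit cubic $P(T)$. I verify that this is even for each curve and then follow the twists: by \Cref{tab2} a twist with $\ell\nmid d$ keeps the $I_0^*$ type, and hence $c_\ell$, unchanged, while a twist with $\ell\mid d$ turns $\ell$ into a prime of good reduction and moves the bad reduction onto the primes dividing $d$ and onto $2$. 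The finitely many $2$-adic twist classes are enumerated using $\Q_2^\times/\left(\Q_2^\times\right)^2=\langle -1,2,5\rangle$, and for each I check directly that $c_2$ is even.

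The main obstacle is exactly this last family, the CM curves with a single odd bad prime ($n=19,43,67,163$). For them evenness is not forced by the $j$-invariant, and the concrete danger is the behaviour already seen for $\lmfdbec{27a3}{27.a4}$ in \Cref{6izo}: a CM curve can have infinitely many quadratic twists with $c_E=1$, driven by an irreducible cubic $P(T)$ that has no root modulo infinitely many primes. The heart of the argument is therefore to show that these $\ell\geq 19$ curves do not behave like $\lmfdbec{27a3}{27.a4}$, i.e. that the Tamagawa number at the distinguished prime---$\ell$ before twisting, and $2$ for the twists that render $\ell$ good---is forced to be even. Handling the twists by $d$ with $\ell\mid d$, where only $p=2$ and the primes dividing $d$ remain, is the delicate point, and it is resolved by the explicit $2$-adic computation described above.
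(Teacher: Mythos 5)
Your proposal has a genuine gap, and it sits exactly where you locate the ``main obstacle.'' First, a smaller point: your first mechanism (a prime $p$ with $-\ord_p(j)=2k>0$) applies to \emph{none} of the $j$-invariants in \cite[Table 4]{ALR}: all of them are integers except the two for $n=17$, whose $2$-adic valuations are $-1$ and $-17$, both odd. So the entire burden falls on your second mechanism, and that one rests on a false claim. You assert that since $j\neq 0,1728$, the geometric automorphism group of $E$ is $\{\pm 1\}$ and hence the reduction at an odd bad prime $\ell$ must be of type $I_0^*$. This confuses the automorphism group of $E$ over $\overline\Q$ with that of its reduction over $\overline\F_\ell$: the Kodaira type is governed by $j \bmod \ell$, not by $j$ as a rational number. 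The curves at hand satisfy $j\equiv 1728 \pmod{\ell}$ at the distinguished prime $\ell$ (e.g.\ $j=-3^3\cdot 5^3\equiv 1728 \pmod 7$ for $n=14$, and $j=-2^{15}\cdot 3^3\equiv 1728\pmod{19}$ for $n=19$), the reduced curve has automorphism group of order $4$, and the reduction type there is $III$ --- exactly as recorded in \Cref{tab1}.

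This is not a cosmetic error: under your $I_0^*$ premise the proof cannot close, and the statement would even be in doubt. Take $E=\lmfdbec{361a1}{361.a2}$ ($n=19$), whose only bad prime is $19$. If the type at $19$ were $I_0^*$, then the twist by $d=-19\equiv 1\pmod 4$ would have good reduction at $19$ (by \Cref{tab2}) and still good reduction at $2$ (by \Cref{lemma1}-type considerations, since $d\equiv 1 \pmod 4$), leaving no prime at all forced to carry an even Tamagawa number --- precisely the $\lmfdbec{27a3}{27.a4}$ phenomenon you flag, and your appeal to ``the explicit $2$-adic computation'' supplies no reason why $c_2$ should be even there. What actually saves the proposition, and is the paper's entire proof, is the type $III$ reduction: for each of the finitely many $j$-invariants the representative curve has an odd prime of type $III$ reduction (\Cref{tab1}), twisting can only exchange $III$ and $III^*$ (\Cref{tab2}), and both types have Tamagawa number exactly $2$ (\Cref{table1}). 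No analysis of $I_0^*$ fibers, root-counting cubics, or $2$-adic twist classes is needed.
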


\begin{proof}
	For each value of $n$, from \cite[Table 4]{ALR} we took all the possible $j-$invariants. They can be found in \Cref{tab1} in the second column. In the third column we have a Cremona label of one of the curves in the class of twists represented by each $j-$invariant. For each of those curves in the fourth column we have a prime of bad reduction of type $III.$ That reduction  can only change to $III^*,$ and vice versa, after twisting, as we see in \Cref{tab2}. \Cref{table1} tells us that the Tamagawa number at primes of reduction type $III$ and $III^*$ is always 2, so the claim follows.
	
\end{proof}
\newpage
	\begin{table}[h!]
		\centering
		
		\begin{tabular}{|r|ccc|}\hline
			\multicolumn{1}{|c|}{$\mathit{n}$} & $j-$invariant & Cremona label & \begin{tabular}[c]{@{}c@{}}bad prime with \\ reduction type $III$\end{tabular} \\ \hline
			14                     &       $-3^3\cdot 5^3$    &        \lmfdbec{49a1}{49.a4}       &               7                 \\
			&       $3^3\cdot 5^3\cdot 17^3$      & \lmfdbec{49a2}{49.a3}       &                 7               \\ \hline
			17                     &      $-\frac{17^2\cdot 101^3}{2}$       &        \lmfdbec{14450p1}{14450.b2}       &               5                 \\
			&      $-\frac{17\cdot 373^3}{2^{17}}$       &        \lmfdbec{14450p2}{14450.b1}       &                5                \\ \hline
			19                     &        $-2^{15}\cdot 3^3$     &              \lmfdbec{361a1}{361.a2} &             19                   \\ \hline
			37                     &      $-7\cdot 11^3$      &       \lmfdbec{1225h1}{1225.b2}        &          5                      \\
			&    $-7\cdot 137^3\cdot 2083^3$      &       \lmfdbec{1225h2}{1225.b1}    &           5                     \\ \hline
			43                     &      $-2^{18}\cdot 3^3\cdot 5^3$       &       \lmfdbec{1849a1}{1849.b2}       &              43                  \\ \hline
			67                     &       $-2^{15}\cdot 3^3\cdot 5^3\cdot 11^3$      &      \lmfdbec{4489a1}{4489.b2}         &               67                 \\ \hline
			163                    &       $-2^{18}\cdot 3^3\cdot 5^3\cdot 23^3\cdot 29^3$      &      \lmfdbec{26569a1}{26569.a2}        &            163     \\              
		\hline
	\end{tabular}
		\vspace{0,2cm}
		\caption{$j-$invariants of the curves $X_0(n)$; their Cremona labels are representatives in the class of twists of least conductor with reduction type $III$ at some prime}\label{tab1}
	\end{table}

\subsection*{Acknowledgments}
The author would like to thank Filip Najman for comments and helpful discussions.

\bibliographystyle{unsrt}

\end{document}